\documentclass[10pt,leqno]{amsart}

\usepackage{amsmath,amsthm}
\usepackage{amssymb}
\usepackage[all]{xypic}
\usepackage{amsbsy}
\usepackage{url}
\usepackage{hyperref}  
\usepackage{enumerate}

\theoremstyle{plain}
\newtheorem{thm}[equation]{Theorem}
\newtheorem{prop}[equation]{Proposition}
\newtheorem{lem}[equation]{Lemma}

\theoremstyle{definition}

\newtheorem{rem}[equation]{Remark}
\newtheorem{exam}[equation]{Example}

\def\quickop#1{\expandafter\newcommand\csname #1\endcsname{\operatorname{#1}}}
\quickop{Hom}
\quickop{End}
\quickop{Aut}
\quickop{map}
\quickop{Tor}
\quickop{holim}

\def\FF{\mathbb{F}}
\def\QQ{\mathbb{Q}}
\def\GG{\mathbb{G}}
\def\SS{\mathbb{S}}
\def\WW{{{\mathbb{W}}}}
\def\ZZ{{{\mathbb{Z}}}}

\def\Ext{\mathrm{Ext}}
\def\longr{{{\longrightarrow\ }}}
\def\CC{{{\mathbb{C}}}}
\def\mm{{\mathfrak{m}}}
\def\FF{{{\mathbb{F}}}}
\def\CP{{{\CC\mathrm{P}}}}
\def\Def{{{\mathrm{Def}}}}
\def\defeq{\overset{\mathrm{def}}=}
\def\Pic{{\mathrm{Pic}}}

\def\RC{{\mathfrak{Z_n}}}
\def\wcom{{ \lim _k\ \ZZ/p^k(p^n-1)}}
\def\pico{{\Pic_{K(n)}^0}}

\def\Sq{{\mathrm{Sq}}}
\def\Gal{{\mathrm{Gal}}}

\date{}
\begin{document}
\title{Comparing Dualities in the $K(n)$-local Category}
  
\author[P. G. Goerss]{Paul G. Goerss}
\address{Department of Mathematics\\ Northwestern University}

\author[M.J. Hopkins]{Michael J. Hopkins}
\address{Department of Mathematics\\Harvard University}

\thanks{Support provided by the National Science Foundation under grant No.~DMS--1810917
and by the Isaac Newton Institute for Mathematical Sciences. The first author was in residence at the Newton
Institute for the program ``Homotopy Harnessing Higher Structures'' supported by the EPSRC grant no EP/K032208/1.
}

\begin{abstract} In their work on the period map and the dualizing sheaf for Lubin-Tate space, Gross and the
second author
wrote down an equivalence between the Spanier-Whitehead and Brown-Comenetz duals of certain
type $n$-complexes in the $K(n)$-local category at large primes. In the culture of the time, these results
were accessible to educated readers, but this seems no longer to be the case; therefore, in this note we give the
details.  Because we are at large primes, the key result is algebraic: in the Picard group of  Lubin-Tate space, two important
invertible sheaves become isomorphic modulo $p$. 
\end{abstract}

\maketitle


Fix a prime $p$ and and an integer $n \geq 0$, and let $K(n)$ denote the $n$th Morava $K$-theory at the prime
$p$. If $n \geq 1$, the $K(n)$-local stable homotopy category has two dualities. First, there is $K(n)$-local
Spanier-Whitehead duality $D_n(-)$. This behaves very much like Spanier-Whitehead duality in the ordinary stable category: it
has good formal properties, but it can be very hard to compute. Second, there is Brown-Comenetz duality $I_n(-)$, which
behaves much like a Serre-Grothendieck duality and, in many ways, is much more computable. One of the key features
of the $K(n)$-local category is that under some circumstances the two dualities are closely related. 

Recall that a finite spectrum $X$ is of type $n$ if $K(m)_\ast X = 0$ for $m < n$. By 
\cite{HS99}, any type $n$ spectrum has a $v_n^{p^k}$-self map; that is, there is an integer $k$ and map 
$$
\Sigma^{2p^k(p^n-1)}X \to X
$$
which induces multiplication by $v_n^{p^k}$ in $K(n)_\ast$.  In their papers on the period map and the dualizing sheaf for
Lubin-Tate space, Gross and the second author \cite{HG} wrote down the following result. Suppose $X$ is a 
type $n$-spectrum with a $v_n^{p^k}$-self map and suppose further that $p$ times the identity map of $X$ is zero. Then
if $2p > \mathrm{max}\{n^2+1,2n+2\}$ there is an equivalence in the $K(n)$-local category\footnote{The bound on $p$ is very slightly different than in \cite{HG}; see Proposition \ref{why-the-hypothesis}.}
\begin{equation}\label{GHno1}
I_nX \simeq \Sigma^{2p^{nk}r(n)+n^2-n} D_nX
\end{equation}
where $r(n) = (p^n-1)/(p-1) = p^{n-1}+\cdots+p+1$. This equivalence gives a conceptual explanation for
many of the self-dual patterns apparent in the amazing computations of Shimomura and his coauthors.
See, for example, \cite{SY}, \cite{ConferenceNotesSadofsky}, \cite{BehrensRevisited}, and \cite{Lader}.

The point of this note is to write down a linear narrative with this result at the center. In some sense, there
is nothing new here, as the key ideas can be found scattered through the literature, and other authors have
obliquely touched on this topic. A rich early example is in \S 5 of the paper \cite{DHaction} by Devinatz and the second
author, and the important paper of Dwyer, Greenlees, and Iyengar \cite{DGI} embeds many of the ideas here into a
far-reaching and beautiful theory. In another sense, however, there is
quite a bit to say, as there are any number of key technical ideas we need to access, some of which have
not quite made it into print and others buried in ways that make them hard to uncover. In any case, the result is of enough
importance that it deserves specific memorialization. 

Here is a little more detail. We fix $p$ and $n$ and let $E = E_n$ be Morava
$E$-theory for $n$ and $p$. This represents a complex oriented cohomology theory with formal group law
a universal deformation of the Honda formal group law $H_n$ of height $n$. See \S 1 for more details. As always we write
$$
E_\ast X = \pi_\ast L_{K(n)}(E \wedge X).
$$
The $E_\ast$-module $E_\ast X$ is a graded Morava module: it has a continuous and twisted 
action of the Morava stabilizer group $\GG_n = \Aut(H_n,\FF_{p^n})$. See Remark \ref{Morava-mods}.

There are two key steps to the equivalence (\ref{GHno1}). We have a $K(n)$-local equivalence
$I_nX \simeq I_n \wedge D_nX$ where $I_n = I_n(S^0)$; thus, the first step is the identification of the 
homotopy type of $I_n $, at least for $p$ large with respect to $n$. This is also due to Gross and the second author, with 
details laid out in \cite{StrickGrossHop}. The key fact is that $I_n$ is dualizable in the $K(n)$-local category; by \cite{HMS}
this is equivalent to the statement that $E_\ast I_n$ is an invertible graded Morava module and, indeed, the main result of
\cite{StrickGrossHop} (interpreting \cite{HG})  is that there is an isomorphism of Morava modules
$$
E_\ast I_n \cong E_\ast S^{n^2-n}[\det]
$$
where $S^0[\det]=S[\det]$ is a determinant twisted sphere in the $K(n)$-local category; see Remark \ref{Sdet-defined}. 
The number $r(n)$ in (\ref{GHno1}) is an artifact of the determinant; see (\ref{whyrn}).

The second key step is an analysis of the $K(n)$-local Picard group $\Pic_{K(n)}$ of equivalence classes of
invertible objects in the $K(n)$-local
category. As mentioned, we know that a $K(n)$-local spectrum $X$ is invertible if and only if $E_\ast X$
is an invertible graded Morava module. We also know that the group of invertible graded 
Morava modules concentrated in even degrees is isomorphic to the continuous cohomology group
$H^1(\GG_n,E_0^\times)$,  where $E_0^\times$ is the group of units in the ring $E_0$. Hence, if we write
$\Pic^0_{K(n)} \subseteq \Pic_{K(n)}$ for the subgroup  of objects $X$ with $E_\ast X$ in even degrees, we get
a map
$$
e:\Pic^0_{K(n)} \longr H^1(\GG_n,E_0^\times).
$$
The map is an injection under the hypothesis
$2p > \mathrm{max}\{n^2+1,2n+2\}$. See Proposition \ref{why-the-hypothesis}. This is the origin for the hypothesis
on $p$ and $n$ in the equivalence of (\ref{GHno1}): it reduces that equivalence to an algebraic calculation.

It is an observation of \cite{HMS} that the map $\ZZ \to \Pic^0_{K(n)}$ sending $k$ to
$S^{2k}$ extends to an inclusion of the completion of the integers
$$
\RC \defeq \lim_k \ZZ/(p^k(p^n-1)) \to \Pic^0_{K(n)};
$$
that is, for any $a \in \RC$ we have a sphere $S^{2a}$. (The phrase ``$p$-adic sphere''
is common here, but misleading: $\RC$ is not the $p$-adic integers. See Remark \ref{get-compl-straight}.)  Now
let $\lambda = \lim_k\ p^{nk}r(n) \in \RC$. The key algebraic result can now be deduced from
Proposition \ref{detred-alg} below: under the composition
$$
\Pic^0_{K(n)} \mathop{\longr}^{e} H^1(\GG_n,E_0^\times) \to H^1(\GG_n,(E_0/p)^\times)
$$
the spectra $S[\det]$ and $S^{2\lambda}$ map to the same element. The equivalence (\ref{GHno1})
follows once we observe that if $X$ is type $n$ and has a $v_n^{p^k}$-self map, then there is $K(n)$-local
equivalence
$$
S^{2\lambda} \wedge X \simeq \Sigma^{2p^{nk}r(n)} X.
$$
See Theorems \ref{detred} and \ref{detred-top}.

It is worth emphasizing that the algebraic result Proposition \ref{detred-alg} only requires $p > 2$; it is
the topological applications which require the more stringent restrictions on the prime. In fact, the equivalence 
of dualities in (\ref{GHno1}) can be false if the prime is small. See Remark \ref{counter-exam}. 

The plan of this note is as follows: in the first section we give some homotopy theoretic and algebraic background,
in the second section we give a discussion of the Picard group, lingering long enough to give details of
the structure of $\pico$ as a profinite $\RC$-module. See Proposition \ref{pic0-zn}. In Section 3 we discuss
the determinant and prove the key Proposition \ref{detred-alg}. In Section 4 we give some discussion 
of how Spanier-Whitehead and Brown-Comenetz duality behave in the Adams-Novikov Spectral Sequence. In the final
section, we give the homotopy theoretic applications. 

\subsection*{Acknowledgements} This project began as an attempt
to find a conceptual computation of the self-dual patterns apparent in the Shimomura-Yabe calculation \cite{SY} of
$\pi_\ast L_{K(2)}V(0)$ at $p  > 3$. Then in a conversation  with Tobias Barthel it emerged  that
there was no straightforward argument in print to prove the equivalence of dualities in (\ref{GHno1}). Later, as Guchuan
Li was working on his thesis \cite{LiThesis} it became apparent that he needed these results and, more,
there were constructions once present in the general culture that were no longer easily accessible. Thus a sequence of notes,
begun at MSRI in Spring of 2014, have evolved into this paper. Many thanks for Agn\`es Beaudry and Vesna Stojanoska for 
reading through an early draft. Others have surely written down proofs for themselves as well; for example,
Hans-Werner Henn once remarked that ``the determinant essentially disappears mod $p$,'' which is a very succinct
summary of Proposition \ref{detred-alg}.

Finally, the authors would like to the thank the referee for a very careful reading and helpful comments. 

\numberwithin{equation}{section}

\section{Some background}

In this section we gather together the basic material used in later sections. All of this is throughly
covered in the literature and collected here only for narrative continuity.

\subsection{The $K(n)$-local category} For an in-depth study of the technicalities
in the $K(n)$-local category, see Hovey and Strickland \cite{HvStr}. Other
introductions can be found in almost any paper on chromatic homotopy theory. We were especially
thorough in \cite{BGH} \S 2. 

Fix a prime $p$ and an integer $n > 0$. In order to be definite we define the $n$th Morava
$K$-theory $K(n)$ to be the $2$-periodic complex oriented cohomology theory with
coefficients $K(n)_\ast = \FF_{p^n}[u^{\pm1}]$ with $u$ in degree $-2$. The 
associated formal group law over $K(n)_0 = \FF_{p^n}$  is the unique $p$-typical formal
group law $H_n$ with $p$-series $[p]_{H_n}(x)  = x^{p^n}$. 
This is, of course, the $n$th  Honda formal group law. For $H_n$ we have
$$
v_n = u^{1-p^n} \in K(n)_{2(p^n-1)}.
$$
The $K(n)$-local category is the category of $K(n)$-local spectra. 

We also have $K(0) = H\QQ$, the rational Eilenberg-MacLane spectrum, and $K(0)$-local spectra
are the subject of rational stable homotopy theory.

We define $\GG_n = \Aut(H_n,\FF_{p^n})$ to be the group of automorphisms of the pair $(H_n,\FF_{p^n})$.
Since $H_n$ is defined over $\FF_p$, there is a splitting
$$
\Aut(H_n,\FF_{p^n}) \cong \Aut(H_n/\FF_{p^n}) \rtimes \Gal(\FF_{p^n}/\FF_{p})
$$
where the normal subgroup is the isomorphisms of $H_n$  as a formal group law
over $\FF_{p^n}$. We write $\SS_n = \Aut(H_n/\FF_{p^n})$ for this subgroup.

To get a Landweber exact homology theory which captures more than Morava $K$-theory, we use 
the Morava (or Lubin-Tate) theory $E=E_n$. This theory has coefficients
$$
E_\ast = \WW[[u_1,\ldots,u_{n-1}]][u^{\pm 1}]
$$
where again $u$ is in degree $-2$ but the power series ring is in degree $0$. The ring 
$\WW=W(\FF_{p^n})$ is the Witt vectors of $\FF_{p^n}$. 

Note that $E_0$ is a complete local ring with maximal ideal $\mm$ generated by the 
regular sequence $\{ p,u_1,\ldots,u_{n-1}\}$. We choose the
formal group law $G_n$ over $E_0$ to be the unique $p$-typical formal group law with
$p$-series 
\begin{equation}\label{gndef}
[p]_{G_n}(x) = px +_{G_n} u_1 x^p +_{G_n} \cdots +_{G_n} u_{n-1}x^{p^{n-1}}
+_{G_n} x^{p^n}.
\end{equation}
Thus $v_i = u_iu^{1-p^i}$, $1 \leq i \leq n-1$, $v_n = u^{1-p^n}$ and $v_i = 0$ if
$i > n$. Note that $G_n$ reduces to $H_n$ modulo $\mm$.

We define $E_\ast X = (E_n)_\ast X$ by
$$
E_\ast X = \pi_\ast L_{K(n)}(E \wedge X).
$$
While not quite a homology theory, as it does not take wedges to sums, it is by far our
most sensitive algebraic invariant in $K(n)$-local homotopy theory.
The group $\GG_n$ acts continuously on $E_\ast X$ making $E_\ast X$ into a
{\it Morava module}. We will be more precise on this notion below in Remark \ref{Morava-mods}. 

A basic computation gives
$$
E_0E = \pi_0L_{K(n)}(E \wedge E) \cong \map^c(\GG_n,E_0)
$$
where $\map^c$ denotes the continuous maps. See Lemma 10 of \cite{StrickGrossHop} for a proof.
The $K(n)$-local $E_n$-based Adams-Novikov Spectral Sequence now reads
\begin{equation}\label{eq:ANSS}
H^s(\GG_n,E_tX) \Longrightarrow \pi_{t-s}L_{K(n)}X.
\end{equation}
Cohomology here is continuous cohomology. 

\begin{rem}[{\bf Lubin-Tate theory}]\label{LTttheory}
The pair $(G_n,E_0)$ has an important universal property which is useful for understanding
the action of $\GG_n$.

Consider a complete local ring $(S,\mm_S)$  with $S/\mm_S$ 
of characteristic $p$. Define the groupoid  of deformations $\Def_{H_n}(S)$ to be
the category with objects $(i,G)$ where $i:\FF_{p^n} \to S/\mm_S$ is a morphism of fields and 
$G$ is a formal group law over $S$ with $q_\ast G = i_\ast H_n$. Here $q:S \to S/\mm_S$ is the quotient map.
There are no morphisms $\psi: (i,G) \to (j,H)$ if $i \ne j$ and a morphism $(i,G) \to (i,H)$ is an 
isomorphism of formal groups laws $\psi: G \to H$ so that $q_\ast \psi$
is the identity. These are the  $\star$-isomorphisms. By a theorem of Lubin and Tate \cite{LT} we know
that if two deformations are $\star$-isomorphic, then there is a unique $\star$-isomorphism 
between them. Put another way, the  groupoid $\Def_{H_n}(S)$ is discrete. Furthermore, 
$E_0$ represents the functor of $\star$-isomorphism classes of deformations:
$$
\Hom^c_{\WW}(E_0,S) \cong \pi_0 \Def_{H_n}(S).
$$
Here $\Hom^c_{\WW}$ is the set of continuous $\WW$-algebra maps.
As a universal deformation we can and do choose the formal group law $G_n$ over $E_0$
to be the  $p$-typical formal group law defined above in (\ref{gndef}).
\end{rem}

\begin{rem}[{\bf The action of the Morava stabilizer group}]\label{action-rem}
We use Lubin-Tate theory to get an action of $\GG_n$ on $E_0$. This exposition follows \cite{HKM} \S 3. 

Let $g = g(x) \in \FF_{p^n}[[x]]$ be an element in $\SS_n$. Choose any lift of $g(x)$ to $h(x)
\in E_0[[x]]$  and let $G_h$ be the unique formal group law over $E_0$ so that 
$$
h:G_h \to G_n
$$
is an isomorphism. Since $g:H_n \to H_n$ is an isomorphism over $\FF_{p^n}$, the pair $(\mathrm{id},G_h)$
is a deformation of $H_n$. Hence there is a unique $\WW$-algebra map $\phi=\phi_g:E_0 \to E_0$ and
a unique $\star$-isomorphism  $f:\phi_\ast G_n \to G_h$. Let $\psi_g$ be the composition
\begin{equation}\label{hgee}
\xymatrix{
\phi_\ast G_n \rto^f \ar@/_1pc/[rr]_{\psi_g} & G_h \rto^-h & G_n\ .
}
\end{equation}
Note that while $G_h$ depends on choices, the map $\phi_g$ and the isomorphism $\psi_g$
do not. The map $\SS_n \to \Aut(E_0)$ sending $g$ to $\phi_g$ defines the action of
$\SS_n$ on $E_0$. The Galois action
on $\WW \subseteq E_0$ extends this to an action of all of $\GG_n$ on $E_0$.
The action can be extended to all of $E_\ast$ be noting that $E_2 \cong \tilde{E}^0S^2\cong \tilde{E}^0\CP^1$
is isomorphic to the module of invariant differentials on the universal deformation $G_n$. 
See (\ref{invaru0}) below for an explicit formula.
\end{rem}

\begin{rem}[{\bf Formulas for the action}]\label{action-forms}
We make the action of $\SS_n$ a bit more precise.
By (\ref{hgee}) we have an isomorphism $\psi_g:\phi_\ast G_n \to G_n$ of $p$-typical formal 
group  laws over $E_0$. This can be written
$$
\psi_g(x) = t_0(g) +_{G_n} t_1(g)x^p +_{G_n} t_2(g)x^{p^2}  +_{G_n} \cdots. 
$$
This formula defines continuous functions $t_i:\SS_n \to E_0$.  As in Section 4.1 of \cite{HKM} we have
\begin{equation}\label{invaru0}
g_\ast u = t_0(g)u.
\end{equation}
The function  $t_0$ is a crossed homomorphism $t_0:\SS_n \to E_0^\times$; that is,
$$
t_0(gh) = [gt_0(h)]t_0(g).
$$
Since the Honda formal group is defined over $\FF_p$ we can choose the class $u$ to be invariant under the action
of the Galois group; hence $t_0$ extends to crossed homomorphism $t_0:\GG_n \to E_0^\times$ sending
$(g,\phi) \in \SS_n \rtimes \Gal(\FF_{p^n}/\FF_p) \cong \GG_n$ to $t_0(g)$. 
\end{rem}

\begin{rem}\label{facts-on-ANSS} We record here some basic useful facts about the $K(n)$-local Adams-Novikov
Spectral Sequence (\ref{eq:ANSS})  which we will use later.

The first two statements are standard and are proved using the action of the center of
$Z(\GG_n) \subseteq \GG_n$ on $E_\ast = 
E_\ast S^0$. There is an isomorphism $\ZZ_p^\times \cong Z(\GG_n)$ sendings $a \in \ZZ_p^\times$ to the
$a$-series $[a]_{H_n}(x)$ of the Honda formal group. The action of $Z(\GG_n)$ on $E_0$ is trivial and the action
on $E_\ast$ is then determined by the fact that $t_0(a) = a$; that is, $a$ acts on $u \in E_{-2}$ by multiplication
by $a$. 

1.) {\bf Sparseness:} If $t \not\equiv 0$ modulo $2(p-1)$, then $H^\ast (\GG_n,E_t) = 0$.  If $p=2$ this is not new
information. If $p > 2$ let $C \subseteq Z(\GG_n)$ be the cyclic subgroup of Teichm\"uller lifts of $\FF_p^\times$.
Then $E_t^{C} = 0$ and hence
\[
H^\ast(\GG_n,E_t) \cong H^\ast(\GG_n/C,E_t^C) = 0.
\]

2.) {\bf Bounded torsion:} Suppose $p > 2$ and suppose $2t = 2p^km(p-1)\ne 0$ with $m$ not divisible by $p$. Then  we have
\[
p^{k+1}H^\ast(\GG_n,E_{2t}) = 0.
\]
If $p=2$ write $2t = 2^k(2m+1)$. Then we have
\[
2H^\ast(\GG_n,E_{2t}) = 0\qquad \mathrm{if}\ k=1,
\]
and
\[
2^{k+1}H^\ast(\GG_n,E_{2t}) = 0\qquad \mathrm{if}\ k>1.
\]

To get these bounds,
first suppose  $p > 2$. Let $K = 1+p\ZZ_p \subseteq Z(\GG_n)$ be the torsion-free subgroup and let $x \in K$ be
a topological generator; for example, $x=1+p$. The choice of $x$ defines an isomorphism $\ZZ_p \cong K$.
Thus, there is an exact sequence
\[
\xymatrix{
0 \to H^0(K,E_{2t}) \rto & E_{2t} \rto^-{x^k-1} &E_{2t} \rto & H^1(K,E_{2t}) \to 0.
}
\]
Thus we see that $p^{k+1}H^1(K,E_{2t}) = 0$ and $H^q(K,E_{2t}) = 0$ if $q\ne 1$.
Now use the Lyndon-Hochschild-Serre Spectral Sequence 
\[
H^p(\GG_n/K,H^q(K,E_{2t})) \Longrightarrow H^{p+q}(\GG_n,E_{2t})
\]
to deduce the claim. At the prime $2$ let $x \in \ZZ_2^\times$ be an element of infinite order
which reduces to $-1$ modulo $4$ -- for example, $x=3$ -- and let $K$ be the subgroup generated by $x$. The proof
then proceeds in the same fashion.

Note that the arguments for parts (1) and (2) apply not only to $\GG_n$, but also for any closed subgroup
$G \subseteq \GG_n$ which contains the center. In fact, for part (1) we need only have $C = \FF_p^\times \subseteq G$.

3.) {\bf There is a uniform and horizontal vanishing line at $E_\infty$:} there is an integer $N$,
depending only on $n$ and $p$, so that in the Adams-Novikov Spectral Sequence (\ref{eq:ANSS}) for any spectrum
$X$
\[
E_\infty^{s,\ast} = 0, \qquad s > N.
\]
This can be found in the literature in several guises; for example, it can be put together from the material in Section
5 of \cite{DH}, especially Lemma 5.11. See \cite{BGH} \S 2.3 for references and explanation. See also \cite{BBGS} for
even further explanation. If $p-1 > n$, there is often a horizontal vanishing line at $E_2$. See Proposition \ref{e2-van-line} below.
\end{rem}

\subsection{Some local homological algebra.} Because $E_0$ is a complete local ring 
with maximal ideal  $\mm$ generated by a regular sequence, we have a variety of
tools from homological algebra. The classic paper here is Greenlees and May \cite{GreenMay}, but see also \cite{HvStr},
Appendix A for direct connections to $E_\ast(-)$. Tensor product below will mean the $\mm$-completed tensor
product.  This is one place where the notation $E_0$ gets out of hand; thus we write $R=E_0$ in this subsection.

Let $u_0 = p$ and define a cochain complex $\Gamma_\mm$ by
$$
\Gamma_\mm = (R \to R[\frac{1}{u_0}])\otimes_R (R \to R[\frac{1}{u_1}])\otimes_R \cdots
\otimes_R
(R \to R[\frac{1}{u_{n-1}}])
$$
and more generally we set
$$
\Gamma_m(M) = M\otimes_R \Gamma_\mm.
$$
Then $H^0_\mm(M) \defeq H^0\Gamma_m(M)$ is the sub-module of $\mm$-torsion and we
see that
$$
H^s\Gamma_\mm(M)  \defeq H^s_\mm(M)
$$
is the $s$th right derived functor of the $\mm$-torsion functor and thus independent of the choices. 
These are  the local cohomology groups. If $M$ is $\mm$-torsion, there is
a composite functor spectral sequence
\begin{equation}\label{UCSS-simp-1}
\Ext^p_R(M,H_\mm^q(N)) \Longrightarrow \Ext^{p+q}_R(M,N).
\end{equation}
In the case $N=R$, this spectral sequence simplifies considerably. Note that  $H^s_\mm(R) = 0$ unless $s=n$ and
\begin{equation}\label{Rminfty}
H^n_\mm(R) \defeq R/\mm^\infty \defeq R/(p^\infty,u_1^\infty,\ldots,u_{n-1}^\infty)\ .
\end{equation}
The $R$-module $R/\mm^\infty$ is an injective $R$-module and, in fact the injective hull of $R/\mm$. 
This is a consequence of Matlis duality for $(R,\mm)$; see \S 12.1 of  \cite{BrodmannSharp},
especially Definition 12.1.2 and Remark  12.1.3.

Combining this observation with the spectral sequence (\ref{UCSS-simp-1}) we have
\begin{equation}\label{UCSS-simp}
\Ext_R^{p+n}(M,R) \cong \Ext^{p}_R(M,R/\mm^\infty) \cong \begin{cases}
\ \Hom_R(M,R/\mm^\infty), &p=0;\\ \ 0,&p  \ne 0.\end{cases}.
\end{equation}

The module $R/\mm^\infty$ also arises in the theory of derived functors of completion. The
completion functor
$$
M \longmapsto \lim_k\ \big[ M \otimes_R R/\mm^k\big]
$$
is neither left nor right exact; however, it still has left derived functors $L^\mm_s(M)$. These vanish
if $s > n$ and there is an isomorphism
$$
L_n^\mm(M) \cong \lim\ \Tor^R_n(M,R/\mm^k) \cong \lim\ \Hom_R(R/\mm^k,M) \cong
\Hom_R(R/\mm^\infty,M).
$$
From this it follows that 
$$
L^\mm_s(M) \cong \Ext_R^{n-s}(R/\mm^\infty,M).
$$

\begin{rem}[\textbf{Morava modules}]\label{Morava-mods} If $X$ is a spectrum we defined 
\[
E_\ast X= \pi_\ast L_{K(n)}(E \wedge X).
\]
By \cite{HvStr}, Proposition 8.4, the $E_\ast$-module $E_\ast X$ is $L^\mm$-complete; that is, the map
$$
E_\ast X \longr L_0^\mm(E_\ast X)
$$
is an isomorphism. In particular, $E_\ast X$ is equipped with the $\mathfrak{m}$-adic topology. 

The action of $\GG_n$ on $E$ determines a continuous action of $\GG_n$ on $E_t X$. 
This  action is twisted in the sense that
if $g \in \GG_n$, $a \in E_0$ and $x \in E_t X$, then $g_\ast(ax) = g_\ast(a)g_\ast(x)$.
We will call an $L^\mm$-complete $E_0$-module with a continuous and twisted $\GG_n$ action a {\it Morava module}.
Many (if not all) of our Morava modules will actually be $\mm$-complete; that is, the natural maps
\[
M \longr L_0^\mm M \longr \lim M/\mm^k M
\]
are all isomorphisms. For example, if $M$ is $\mm$-complete, so is the induced module of continuous
map $\map^c(\GG_n,M)$. Hence the continuous cohomology of $\mm$-complete Morava modules can be 
constructed entirely in the category of $\mm$-complete Morava modules. 

The graded Morava module module $E_\ast X$ is determined by $E_0X$, $E_{1}X$, and the isomorphism of $\GG_n$-modules,
for $n \in \ZZ$,
$$
E_{t+2n}X \cong E_2^{\otimes n} \otimes_{E_0} E_tX.
$$
The $\GG_n$-action is the diagonal action.  If $n \geq 0$, $E_2^{\otimes n} = E_2 \otimes_{E_0} \cdots \otimes_{E_0} E_2$ is a
free of rank $1$ over $E_0$. If $n<0$, then $E_2^{\otimes n}$ is the dual $\GG_n$-module to $E_2^{\otimes -n}$.
This discussion gives an evident category of graded Morava modules.

We say a graded Morava module $M_\ast$ is finitely generated if it is finitely generated as a graded $E_\ast$-module or, 
equivalently, if $M_0$ and $M_1$ are finitely generated as $E_0$-modules. We also say a graded Morava module $M_\ast$
is finite if $M_0 $ and $M_{1}$ are finite. If $X$ is a finite CW spectrum then $E_\ast X$ is finitely generated. More
generally, $X$ is dualizable in the $K(n)$-local category if and only if $E_\ast X$ is finitely generated. See
Theorem 8.6 of \cite{HvStr}. If $X$ is also
of type $n$, then $E_\ast X$ is finite.
\end{rem}

Here is a key fact about Morava modules which we use often. The argument owes quite a good deal to the proof of
Lemma 5 of \cite{StrickGrossHop}. 

\begin{prop}\label{e2-van-line} Let $p-1 > n$ and let $M$ be an $\mm$-complete Morava module. Then for all $s > n^2$
\[
H^s(\GG_n,M) = 0.
\]
\end{prop}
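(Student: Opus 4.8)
The plan is to reduce the computation of $H^\ast(\GG_n,M)$ to that of a single torsion-free pro-$p$ subgroup of cohomological dimension exactly $n^2$, the point being that the hypothesis $p-1>n$ removes all the $p$-torsion from $\GG_n$. Recall that $\SS_n\cong\mathcal{O}^\times$, where $\mathcal{O}$ is the maximal order in the central division algebra $D/\QQ_p$ of Hasse invariant $1/n$, and that $\GG_n\cong\mathcal{O}^\times\rtimes\Gal(\FF_{p^n}/\FF_p)$. Since $\mathcal{O}$ is free of rank $n^2$ over $\ZZ_p$, the group $\mathcal{O}^\times$ is a compact $p$-adic analytic group of dimension $n^2$.

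First I would isolate the pro-$p$ part: let $\mathfrak{P}\subset\mathcal{O}$ be the maximal two-sided ideal and set $S=1+\mathfrak{P}$. Since $\mathcal{O}^\times/S\cong\FF_{p^n}^\times$ has order prime to $p$, the subgroup $S$ is the unique maximal pro-$p$ subgroup of $\mathcal{O}^\times$; being characteristic there, it is normal in $\GG_n$, and $\GG_n/S\cong\FF_{p^n}^\times\rtimes\Gal(\FF_{p^n}/\FF_p)$ has order $n(p^n-1)$, which is prime to $p$ because $p-1>n$. The same hypothesis makes $S$ torsion-free: an element of order $p$ in $\mathcal{O}^\times$ would, via the factorization $T^p-1=(T-1)\Phi_p(T)$ over $\QQ_p$, yield an embedding $\QQ_p(\zeta_p)\hookrightarrow D$, i.e.\ a subfield of degree $p-1$ over $\QQ_p$; but every $\QQ_p$-subfield of $D$ has degree dividing $n<p-1$. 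A torsion-free compact $p$-adic analytic group is a Poincar\'e duality pro-$p$ group whose cohomological dimension equals its analytic dimension (Lazard), so $\mathrm{cd}_p(S)=n^2$.

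Next I would transport the vanishing up to $\GG_n$. From $\mathrm{cd}_p(S)=n^2$ one gets $H^s(S,A)=0$ for $s>n^2$ and every $p$-torsion $S$-module $A$; a routine passage to the limit, filtering $M$ by the submodules $\mm^kM$ and using that $S$ is topologically finitely generated, extends this to every $\mm$-complete Morava module $M$. Feeding this into the Lyndon--Hochschild--Serre spectral sequence
\[
H^i\big(\GG_n/S,\ H^j(S,M)\big)\Longrightarrow H^{i+j}(\GG_n,M),
\]
we note that each $H^j(S,M)$ is a $\ZZ_p$-module and $|\GG_n/S|$ is a unit in $\ZZ_p$, so $H^i(\GG_n/S,-)=0$ for $i>0$ and the spectral sequence collapses onto $i=0$. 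Hence $H^s(\GG_n,M)\cong H^s(S,M)^{\GG_n/S}$, which is zero for $s>n^2$.

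The main obstacle is the input from the theory of $p$-adic analytic groups in the second paragraph: one must verify carefully that $p-1>n$ kills every element of order $p$ in $\mathcal{O}^\times$, and then invoke Lazard's theorem to obtain the precise value $\mathrm{cd}_p(S)=n^2$ --- a merely finite bound would not give the sharp range $s>n^2$. Once that is in place, the descent along the normal subgroup $S\subseteq\GG_n$ and the treatment of general $\mm$-complete coefficients are formal.
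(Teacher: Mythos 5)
Your proof is correct and follows essentially the same route as the paper: both identify the subgroup $S_n=\{g : g'(0)=1\}=1+\mathfrak{P}$, use $p-1>n$ to see it is torsion-free, invoke Lazard to get cohomological dimension exactly $n^2$, and then descend along the prime-to-$p$ quotient. The only differences are cosmetic --- you give a self-contained division-algebra argument for torsion-freeness where the paper cites Henn, and you descend in one step via the order-$n(p^n-1)$ quotient where the paper descends in two steps using an explicit Galois-descent lemma.
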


\begin{proof}Let $S_n \subseteq \SS_n$ be the subgroup of automorphisms $g = g(x)$ of the Honda formal group
$H_n$ so that $g'(0)=1$. Then $S_n$ is a compact $p$-adic analytic group of dimension $n^2$ by  \S 3.1.2 of 
\cite{HennDuke}.  Under the assumption $p-1 > n$ the group $S_n$ is torsion-free; see Theorem 3.2.1 of \cite{HennDuke}.
By a theorem of Lazard (combine Theorems 4.4.1 and 5.1.9. of \cite{SymondsWeigel}) we may conclude
$S_n$ is a Poincar\'e duality group of dimension $n^2$ and of cohomological dimension $n^2$. Since the index
of $S_n$ in $\SS_n$ is finite and prime to $p$, the cohomological dimension of $\SS_n$ is also $n^2$. So far we have
not used the hypothesis on $M$.

Let $\Gal = \Gal(\FF_{p^n}/\FF_p)$. If $M$ is an $\mm$-complete Morava module then $M^{\SS_n}$ is a $p$-complete
twisted $\WW$-$\Gal$-module; that is, if $g \in \Gal$ and $x \in M^{\SS_n}$, then $g(ax) = g(a)g(x)$.
Now we use a version of Galois descent -- see Lemma \ref{Gal-inb-bis} below -- to conclude
\[
H^\ast(\GG_n,M) \cong H^\ast(\SS_n,M)^{\Gal}
\]
and we have the vanishing we need.
\end{proof}

\begin{lem}\label{Gal-inb-bis} Let $\Gal = \Gal(\FF_{p^n}/\FF_p)$.
Let $M$ be a $p$-complete twisted $\WW$-$\Gal$-module. Then the inclusion
$M^{\Gal} \to M$ of the invariants extends to an isomorphism of twisted $\WW$-$\Gal$-modules
\[
\WW \otimes_{\ZZ_p} M^{\Gal} \cong M .
\]
The functor $M \mapsto M^{\Gal}$ from $p$-complete twisted $\WW$-$\Gal$-modules to $p$-complete modules is exact. 
\end{lem}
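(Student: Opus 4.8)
The plan is to run a standard Galois-descent argument, but being careful about the twisting and the $p$-completion. First I would recall the key structural fact about $\WW = W(\FF_{p^n})$: it is a finite \'etale (indeed Galois) extension of $\ZZ_p$ with Galois group $\Gal = \Gal(\FF_{p^n}/\FF_p)$ of order $n$ prime to $p$. The normal basis theorem for the extension $\FF_{p^n}/\FF_p$, lifted via Witt vectors (or directly: $\WW$ is a free $\ZZ_p[\Gal]$-module of rank $1$), gives an isomorphism of $\ZZ_p[\Gal]$-modules $\WW \cong \ZZ_p[\Gal]$. Since $|\Gal|$ is prime to $p$, the group ring $\ZZ_p[\Gal]$ is a (pro-$p$-completed) semisimple-like object: more concretely, the averaging idempotent $e = \frac{1}{n}\sum_{g \in \Gal} g$ makes sense integrally over $\ZZ_p$, and this is the mechanism that forces the higher cohomology of $\Gal$ with coefficients in any $\ZZ_p[\Gal]$-module to vanish.

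Next I would reduce the twisted statement to an untwisted one. Given a $p$-complete twisted $\WW$-$\Gal$-module $M$, the twisting means precisely that $M$, viewed as a module over the twisted group ring $\WW\langle\Gal\rangle$ (the crossed product, where $g \cdot a = g(a) \cdot g$), is an ordinary module. The classical fact — a form of Hilbert 90 / descent for the Galois extension $\ZZ_p \to \WW$ — is that the twisted group ring $\WW\langle\Gal\rangle$ is Morita equivalent to $\ZZ_p$, with the equivalence given by $N \mapsto \WW \otimes_{\ZZ_p} N$ in one direction and $M \mapsto M^{\Gal}$ in the other. Concretely: the map $\WW \otimes_{\ZZ_p} M^{\Gal} \to M$, $a \otimes x \mapsto ax$, is the unit of an adjunction, and one checks it is an isomorphism after reducing mod $p$ — where it becomes the statement that $\FF_{p^n} \otimes_{\FF_p} M_0^{\Gal} \cong M_0$ for $M_0 = M/pM$ a twisted $\FF_{p^n}$-$\Gal$-module, which is the descent statement for the \'etale extension $\FF_p \to \FF_{p^n}$ (provable by the normal basis theorem, or by observing $\FF_{p^n} \otimes_{\FF_p} \FF_{p^n} \cong \prod_{\Gal}\FF_{p^n}$ and diagonalizing the semilinear action). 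Since both sides are $p$-complete and the map is an isomorphism mod $p$, it is an isomorphism; here I would invoke that $\WW \otimes_{\ZZ_p}(-)$ is exact and preserves $p$-completeness because $\WW$ is finite free over $\ZZ_p$.

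Finally, exactness of $M \mapsto M^{\Gal}$ follows formally once we have the equivalence: $M^{\Gal}$ is naturally a retract of $M$ via the idempotent $e = \frac{1}{n}\sum_{g}g$ acting $\ZZ_p$-linearly (this uses $n$ invertible in $\ZZ_p$, which holds since $\gcd(n,p)=1$), so $M^{\Gal} = eM$ is a direct summand of $M$ as a $\ZZ_p$-module, functorially in $M$; taking a summand cut out by a natural idempotent is an exact functor. Equivalently, $H^i(\Gal, M) = 0$ for $i > 0$ and $M^{\Gal} = H^0(\Gal,M)$ is then exact by the long exact sequence in group cohomology.

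I do not expect a serious obstacle here; the one point that needs care is bookkeeping the twist — making sure the unit map $\WW \otimes_{\ZZ_p} M^{\Gal}\to M$ is a map of twisted modules (it is, since $\Gal$ acts on the left factor $\WW$ by its Galois action and trivially on $M^{\Gal}$, and the Galois-semilinearity of $M$ exactly matches) — and making sure everything is compatible with $p$-completion, for which the finiteness of $\WW$ over $\ZZ_p$ does all the work. The genuinely substantive input is the normal basis theorem, i.e.\ that $\FF_{p^n}$ is free of rank one over $\FF_p[\Gal]$; everything else is formal manipulation with the idempotent $e$.
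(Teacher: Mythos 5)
Your overall strategy (Galois descent for $\ZZ_p\to\WW$, checked via the splitting $\FF_{p^n}\otimes_{\FF_p}\FF_{p^n}\cong\prod_{\Gal}\FF_{p^n}$ and the normal basis theorem) is the right one and is close in spirit to the paper's proof, which simply runs the descent isomorphism for $\ZZ/p^k\to\WW/p^k$ at each finite level and passes to the inverse limit. But there is a genuine error at the load-bearing point of your exactness argument: you assert that $|\Gal|=n$ is prime to $p$ and build the averaging idempotent $e=\frac{1}{n}\sum_{g}g$ on that basis. The lemma carries no such hypothesis, and it is false in general ($p=2$, $n=2$ already gives $\Gal(\FF_4/\FF_2)$ of order $2$). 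When $p\mid n$ the idempotent does not exist over $\ZZ_p$, and the claim that higher $\Gal$-cohomology vanishes for \emph{any} $\ZZ_p[\Gal]$-module is also false (take the trivial module $\ZZ/p$). What actually saves the day is that the relevant modules are not arbitrary: the descent isomorphism $\WW\otimes_{\ZZ_p}M^{\Gal}\cong M$ exhibits $M$ as induced (via $\WW\cong\ZZ_p[\Gal]$), so cohomological triviality follows from Shapiro's lemma, not from an idempotent; alternatively, and this is what the paper does, exactness of $(-)^{\Gal}$ follows because $\WW\otimes_{\ZZ_p}(-)$ is faithfully exact ($\WW$ being finite free over $\ZZ_p$) and composing it with $(-)^{\Gal}$ gives the identity. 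Your closing sentence "everything else is formal manipulation with the idempotent $e$" is therefore exactly the part that must be excised.

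A secondary soft spot: you deduce the isomorphism $\WW\otimes_{\ZZ_p}M^{\Gal}\to M$ from the mod-$p$ case by saying both sides are $p$-complete. Surjectivity does follow by successive approximation, but injectivity of a map of $p$-complete modules is not automatic from injectivity mod $p$ when the modules are not $p$-torsion-free. The cleaner route, taken in the paper, is to note that $\ZZ/p^k\to\WW/p^k$ is Galois for every $k$, so descent gives $\WW/p^k\otimes_{\ZZ/p^k}(M/p^kM)^{\Gal}\cong M/p^kM$ on the nose, together with the identification $M^{\Gal}\cong\lim(M/p^kM)^{\Gal}$ (using that $M^{\Gal}/p^kM^{\Gal}\to(M/p^kM)^{\Gal}$ is injective); the statement then follows by passing to the limit, with no appeal to a Nakayama-type argument.
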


\begin{proof} This can be proved using standard descent theory, but here is a completely explicit argument.

We are using the completed tensor product
\[
\WW \otimes_{\ZZ_p} N = \lim(\WW \otimes_{\ZZ_p} N)/p^k \cong \lim (\WW/p^k \otimes_{\ZZ/p^k} N/p^kN).
\]
First, since inverse limits commute with invariants, we have
\[
M^{\Gal} \cong (\lim M/p^kM)^\Gal \cong \lim (M/p^kM)^\Gal .
\]
Next, the map $M^{\Gal} \to (M/p^kM)^{\Gal}$ factors as
\[
M^{\Gal} \longr M^{\Gal}/p^kM^{\Gal} \longr (M/p^kM)^{\Gal}
\]
with the second map an inclusion. This yields isomorphisms
\[
M^\Gal \cong \lim (M^{\Gal}/p^kM^{\Gal}) \cong \lim(M/p^kM)^{\Gal}.
\]
Since $\WW$ is a finitely generated free $\ZZ_p$ module
\[
\WW \otimes_{\ZZ_p} M^{\Gal} \cong \lim (\WW/p^k \otimes_{\ZZ_{p^k}} M^{\Gal}/p^kM^{\Gal})
 \cong \lim(\WW/p^k \otimes_{\ZZ_{p^k}}(M/p^kM)^{\Gal}).
\]
Finally, $\ZZ/p^k \to \WW/p^k$ is Galois with Galois group $\Gal$ we have
\[
\WW/p^k \otimes_{\ZZ/p^k} (M/p^kM)^{\Gal} \cong M/p^kM.
\]
The exactness statement follows from the fact that $\WW$ is a free and finitely
generated $\ZZ_p$-module, so $N \mapsto \WW \otimes_{\ZZ_p} N$ is exact. 
\end{proof}

\section{Picard groups}

The point of this section is to develop enough technology to pave the way for the key Proposition \ref{detred-alg}.

\subsection{Some basics} Let $\Pic_{K(n)}$ denote the $K(n)$-local Picard group 
of weak equivalence classes of invertible elements. Here is an observation
from \cite{HMS}.  If $X \in \Pic_{K(n)}$, then $K(n)_\ast X$
is an invertible $K(n)_\ast$-module and, since $K(n)_\ast$ is a graded field, it follows
that $K(n)_\ast X$ is of rank $1$ over $K(n)_\ast$. From this it follows from Proposition
8.4 of \cite{HvStr}  that $E_\ast X$ is also free of rank $1$ over $E_\ast$. 

\begin{rem}\label{alg-pic-zero} Let $\Pic^0_{K(n)} \subseteq \Pic_{K(n)}$ be the subgroup of
index  $2$ generated by the elements $X$ with $E_\ast X$ in even degrees. Then $E_0X$ is 
free of rank $1$ over $E_0$. If we choose a generator $a \in E_0X$ then we can 
define a crossed homomorphism $\phi:\GG_n \to E_0^\times$ by the formula
\[
ga = \phi(g)a, \qquad g \in \GG_n.
\]
This defines a homomorphism
\begin{equation}\label{exotic-e}
e:\Pic^0_{K(n)} \longr H^1(\GG_n,E_0^\times)
\end{equation}
to the algebraic Picard group of invertible Morava modules. We write $\kappa_n$ for the kernel of $e$; this is the
subgroup of {\it exotic elements} in the Picard group.

{\bf Notation:} Both $\Pic_{K(n)}$ and $H^1(\GG_n,E_0^\times)$ are abelian groups where the group operation
is written as multiplicatively; thus $e(X \wedge Y) = e(X)e(Y)$.
\end{rem}

The following result explains the hypothesis on the prime in the equivalence of dualities (\ref{GHno1}). This
appears in the literature in various guises; the exact criterion on the prime depends on the setting, but the 
proof is always  the same as in Theorem 5.4 of \cite{HSad}.

\begin{prop}\label{why-the-hypothesis} Suppose $2p > \mathrm{max}\{n^2+1,2n+2\}$. Then $\kappa_n = 0$ and the
map $e$ is an injection.
\end{prop}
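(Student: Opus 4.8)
The plan is to show that both the exotic Picard group $\kappa_n$ vanishes and that the map $e$ is injective by analyzing the descent (or Adams--Novikov) spectral sequence computing $\Pic_{K(n)}$, following the strategy of Theorem 5.4 of \cite{HSad}. The key tool is the spectral sequence of Hopkins--Mahowald--Sadofsky, whose $E_2$-term in filtration $s$ and internal degree $t$ is roughly $H^s(\GG_n, \pi_t \mathrm{gl}_1 E)$, and which converges to $\pi_{t-s}$ of the Picard spectrum; the group $\Pic^0_{K(n)}$ receives contributions from $H^s(\GG_n, E_t^\times)$-type terms along the line $t - s = 0$. The point is that for $s \geq 2$ the relevant obstruction and indeterminacy groups are identified with ordinary cohomology groups $H^s(\GG_n, E_t)$ (via the logarithm isomorphism $\pi_t \mathrm{gl}_1 E \cong \pi_t E$ in high enough degrees, which holds when $p$ is large relative to $n$), and these vanish for range reasons.

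The key steps, in order, would be: (1) Recall the Picard spectral sequence and the identification of its $E_2$-page in positive stems with $H^s(\GG_n, E_t)$ for $t \geq 2$, valid once $2p - 2 > $ the relevant bound so that the exponential/logarithm comparison between $\mathrm{gl}_1 E$ and $E$ is an isomorphism in the needed degrees. (2) Invoke \textbf{sparseness} (Remark \ref{facts-on-ANSS}(1)): $H^s(\GG_n, E_t) = 0$ unless $t \equiv 0 \pmod{2(p-1)}$, so along the diagonal $t = s$ contributing to $\kappa_n$ (filtration $s \geq 2$) the only possible nonzero groups sit at $s = t = 2(p-1) m$. (3) Invoke the \textbf{horizontal vanishing line} (Remark \ref{facts-on-ANSS}(3)) and/or Proposition \ref{e2-van-line}: since $p - 1 > n$ (which follows from $2p > n^2 + 1$ when... — more carefully, from $2p > 2n+2$), $H^s(\GG_n, M) = 0$ for $s > n^2$ for any $\mm$-complete Morava module $M$. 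Combining (2) and (3): a nonzero contribution to $\kappa_n$ in filtration $s$ needs $s \leq n^2$ and $s = 2(p-1)m$ for some $m \geq 1$, forcing $2(p-1) \leq n^2$, i.e. $2p \leq n^2 + 2$; the hypothesis $2p > n^2 + 1$ together with parity (both sides even, so $2p \neq n^2+1$ forces $2p \geq n^2 + 2$... one must check the integer arithmetic carefully, but the upshot is the hypothesis is exactly what rules out $2(p-1) \leq n^2$). (4) Conclude that all positive-filtration contributions to $\Pic^0_{K(n)}$ on the relevant diagonal vanish, hence $\kappa_n = 0$; the injectivity of $e$ is then the statement that the filtration-$1$ piece $H^1(\GG_n, E_0^\times)$ detects everything, which is precisely $\kappa_n = 0$.

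The main obstacle, I expect, is getting the comparison between $\mathrm{gl}_1 E$ and $E$ (equivalently, between the multiplicative and additive cohomology groups) correct in exactly the range of degrees that matters, and tracking the precise numerical bound this forces on $p$: the exponential map $\exp\colon \mathfrak{m}^j E_0 \to (1 + \mathfrak{m}^j E_0)^\times$ is an isomorphism only once $j$ is large enough that the denominators in $\log$ and $\exp$ are units, and one must verify that in the stems $0 < t - s \leq$ (vanishing line) the relevant unit group $E_t^\times$ or its associated graded is already in the range where this holds. This is where the bound $2n+2$ rather than something else enters, and it is the only genuinely delicate point; the rest is assembling sparseness plus the vanishing line, both already available from Remark \ref{facts-on-ANSS} and Proposition \ref{e2-van-line}. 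I would also need to handle the very bottom of the spectral sequence (filtration $0$ and the edge homomorphism) to see that $e$ is well-defined and that its kernel is exactly the higher-filtration exotic part, but this is formal.
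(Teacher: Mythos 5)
Your proposed route --- running the descent spectral sequence for the Picard spectrum and killing the positive-filtration contributions to $\pi_0$ --- is viable and rests on exactly the same two cohomological inputs as the paper (sparseness and the horizontal vanishing line), but the paper takes a more elementary path that avoids the Picard spectrum entirely. Given invertible $X$ and $Y$ with $e(X)=e(Y)$, it forms $Z = X\wedge D_nY$, observes that $E_\ast Z\cong E_\ast S^0$ as Morava modules, and reduces everything to showing that the resulting class $\iota\in H^0(\GG_n,E_0Z)$ is a permanent cycle in the Adams--Novikov Spectral Sequence (\ref{eq:ANSS}): a surviving $\iota$ yields a map $L_{K(n)}S^0\to Z$ inducing an isomorphism on $E_\ast$, hence an equivalence. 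The differentials on $\iota$ land in subquotients of $H^{s+1}(\GG_n,E_s)$ for $s\geq 1$, which are the same groups your spectral sequence must kill, and they vanish for the reasons you give. What your framework buys is a uniform treatment of injectivity and surjectivity of $e$ in a single spectral sequence (the paper handles surjectivity separately, by a Toda-style obstruction theory; see Remark \ref{Toda}); what it costs is the convergence and identification work you flag. That delicate point is, however, less delicate than you fear for the injectivity statement: for $t\geq 1$ one has $\pi_t\,\mathrm{gl}_1E\cong\pi_tE$ as $\GG_n$-modules with no exponential or largeness of $p$ needed, and since the relevant groups already vanish at $E_2$ you never have to compare differentials across the two spectral sequences. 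The exponential genuinely enters only at $\pi_0$, in comparing $E_0^\times$ with $E_0$, which is not where the obstructions live.

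One correction to your numerology. The obstruction groups sit at $H^{s+1}(\GG_n,E_s)$ (equivalently, at $H^{s}(\GG_n,E_{s-1})$ for $s\geq 2$ in the Picard-spectrum indexing), not on the diagonal $H^s(\GG_n,E_s)$. Sparseness forces the internal degree $s$ to be divisible by $2(p-1)$ and the vanishing line forces the cohomological degree to satisfy $s+1\leq n^2$, so the first possible obstruction occurs when $2(p-1)+1\leq n^2$, that is, when $2p\leq n^2+1$; this is ruled out verbatim by the hypothesis, with no parity argument required. Your version of the inequality, $2(p-1)\leq n^2$, is \emph{not} excluded by $2p>n^2+1$ alone (e.g.\ $n=6$, $p=19$ gives $2p=n^2+2$ while satisfying both $2p>n^2+1$ and $2p>2n+2$), so the off-by-one is not cosmetic: the correct indexing is exactly what makes the stated bound on $p$ sufficient.
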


\begin{proof} Suppose $X$ and $Y$ are two invertible spectra so $E_0(X) \cong E_0(Y)$
as Morava modules.  Let $D_nY = F(Y,L_{K(n)}S^0)$ be the $K(n)$-local Spanier-Whitehead dual of $Y$. 
Then $D_nY$ is the inverse of $Y$ in $\pico$; hence, $E_0 D_nY$ is the inverse of $E_0Y$ as an invertible Morava
module. It follows that $E_\ast (X \wedge D_nY) \cong E_\ast S^0$ as Morava modules and we need only show that the
class
\[
\iota \in H^0(\GG_n, E_0 (X \wedge D_nY))
\]
determined by this isomorphism is a permanent cycle. The differentials will lie in subquotients of
\[
H^{s+1}(\GG_n,E_s), \quad s \geq 1.
\]
Under the hypotheses here, we can now
apply the sparseness result of part (1) of Remark \ref{facts-on-ANSS} and the horizontal vanishing line of
Proposition \ref{e2-van-line}. The second of these requires $p-1 > n$ and the first requires $2(p-1)+1 > n^2$.
Combined they imply that all differentials on $\iota$ land in zero groups.
\end{proof}

\begin{rem}\label{Toda}
A more sophisticated variation of the argument used to prove Proposition \ref{why-the-hypothesis} will also
show that $e$ is surjective under the same hypotheses on $p$ and $n$. See \cite{Piotr} for details. Here is an outline.

Let $M$ be an invertible graded Morava module and let $M^\vee = \Hom_{E_0}(M,E_0)$ with conjugation $\GG_n$-action; see 
Remark \ref{E-UCSS-note} on why this action arises.The essential
idea is to use a Toda-style obstruction theory with successively defined obstructions
\[
\theta_s \in H^{s+2}(\GG_n,E_{s} \otimes_{E_0} M \otimes_{E_0} M^\vee) \cong H^{s+2}(\GG_n,E_{s}),\quad s \geq 1,
\]
to finding such an $X$ with $E_0 X \cong M$. Such an obstruction theory can be constructed
using Toda's techniques \cite{toda-obst} or a linearized version of the vastly more complex
obstruction theory of \cite{GoerssHopkins}. These obstruction groups 
 will vanish if $2p > \mathrm{max}\{n^2+1,2n+2\}$. 
\end{rem}

The basic example of an element in $\Pic^0_{K(n)}$ is the localized $2$-sphere $L_{K(n)}S^2$.
Since $E_0S^2 \cong E_{-2}$ we can choose $u \in E_{-2}$ as the generator and the associated crossed homomorphism is 
$t_0:\GG_n \to E_0^\times$. See (\ref{invaru0}). 

We next explore the underlying algebraic structure of the Picard group $\pico$; in particular, it is a profinite abelian group and
continuous module over the rather unusual completion of the integers
\begin{equation}\label{defn-RC}
\RC \defeq \wcom\ .
\end{equation}
The canonical isomorphisms $\ZZ/p^k(p^n-1) \cong \ZZ/p^k \times \ZZ/(p^n-1)$ assemble to give a continuous isomorphism
of rings
\[
\RC \cong \ZZ_p \times  \ZZ/(p^n-1).
\]
See Remark \ref{get-compl-straight} for more thoughts on the ring $\RC$. 

The number $p^n-1$ appears in a number of ways in $K(n)$-local homotopy theory; for example,
the element $v_n = u^{-(p^n-1)} \in E_{2(p^n-1)}$. We explore that observation
more in Remark \ref{closer-to-HS} below. In this context, however, 
the ring $\RC$ arises for a much more basic reason.

\begin{lem}\label{whyRC} Let $(S,\mm_S)$ be a complete local ring with residue field $\FF_{p^n} \cong S/\mm_s$.
The abelian group structure on the group of units $S^\times$ extends to a continuous $\RC$-module structure
in the topology given by the isomorphism $S^\times \cong \lim (S/\mm^k)^\times$. 
\end{lem}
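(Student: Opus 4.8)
The plan is to use completeness of $S$ to reduce the statement to the finite-level quotients $S/\mm_S^k$, where the $\RC$-module structure on the unit group is essentially forced by an elementary bound on the exponent.

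First I would record that, since $S = \lim_k S/\mm_S^k$ and an element of $S$ is a unit precisely when its image in the residue field $\FF_{p^n} = S/\mm_S$ is nonzero, we have $S^\times \cong \lim_k (S/\mm_S^k)^\times$; this is the topology named in the statement. Next, fix $k$ and consider the reduction sequence
\[
1 \longr 1 + \mm_S/\mm_S^k \longr (S/\mm_S^k)^\times \longr \FF_{p^n}^\times \longr 1.
\]
The kernel $1 + \mm_S/\mm_S^k$ is filtered by the subgroups $1 + \mm_S^j/\mm_S^k$ with successive quotients $\mm_S^j/\mm_S^{j+1}$ for $1 \le j \le k-1$, and each of these is an $\FF_{p^n}$-vector space, hence killed by $p$; therefore $1 + \mm_S/\mm_S^k$ is killed by $p^{k-1}$. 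Since $\FF_{p^n}^\times$ is killed by $p^n - 1$, it follows that $(S/\mm_S^k)^\times$ is killed by $p^{k-1}(p^n-1)$, and in particular by $p^k(p^n-1)$. An abelian group killed by an integer $N$ is canonically a $\ZZ/N$-module, so $(S/\mm_S^k)^\times$ is a $\ZZ/p^k(p^n-1)$-module, and these structures are visibly compatible with the transition maps $S/\mm_S^{k+1} \to S/\mm_S^k$, since raising to an integer power commutes with ring homomorphisms.

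Finally I would pass to the inverse limit: $S^\times \cong \lim_k (S/\mm_S^k)^\times$ becomes a module over $\lim_k \ZZ/p^k(p^n-1) = \RC$, extending the given abelian group structure (which is the restriction of the $\ZZ$-action by powers). Continuity is immediate, since at each level $k$ the action is that of the finite ring $\ZZ/p^k(p^n-1)$ on a discrete group and the projection $\RC \to \ZZ/p^k(p^n-1)$ is continuous by definition of the limit topology. I do not expect a genuine obstacle here; the only point that wants a word of care is that $S$ is not assumed Noetherian, so the groups $(S/\mm_S^k)^\times$ need not be finite, but they are bounded torsion, which is all the argument uses. (When $S$ is Noetherian with finite residue field, as in all the applications in this paper, $S^\times$ is in fact profinite, and the $\RC$-action is continuous in the strongest sense.)
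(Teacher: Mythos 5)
Your proof is correct and follows essentially the same route as the paper: both rest on the exponent bound that every unit of $S/\mm_S^{k}$ is killed by a power of $p$ times $p^n-1$ (the paper phrases this as $x^{p^k(p^n-1)} \equiv 1$ mod $\mm_S^{k+1}$, defining $x^a$ via integer approximations of $a$), and then pass to the inverse limit. Your filtration of the one-units by the subgroups $1+\mm_S^j/\mm_S^k$ just makes explicit the induction the paper leaves implicit.
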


\begin{proof} For any $a \in \RC$ and any $x \in S^\times$ we must define an element $x^a \in 
S^\times$. Furthermore if $a = n \in \ZZ$, then we need $x^a = x^n$.

Since $(S/\mm)^\times \cong \FF_{p^n}^\times$, any $x \in S^\times$ has the property
that $x^{p^n-1} \equiv 1$ modulo $\mm_S$ and, hence, that 
\[
x^{p^k(p^n-1)} \equiv 1\quad \mathrm{mod}\quad \mm^{k+1}.
\]
Let $a \in \wcom \in \RC$. For each integer $k \geq 0$ choose an integer $a_k$ so that
$a_k \equiv a \in \ZZ/p^k(p^n-1)$. Then the elements
\[
x^{a_k} \in (S/\mm^{k+1})^\times
\]
define an element $x^a \in S^\times \cong \lim (S/\mm^{k+1})^\times$ as needed.
\end{proof}

The basic application of Lemma \ref{whyRC} is to the ring $S = E_0$. This further
implies that the continuous cohomology group $H^1(\GG_n,E_0^\times)$ is a continuous module over $\RC$. 

It turns out we can show $\Pic^0_{K(n)}$ is also a profinite module over $\RC$ and the evaluation map 
\[
e:\pico \longr H^1(\GG_n,E_0^\times)
\]
is a continuous map of  $\RC$ modules. This can be deduced from Proposition 14.3.d of \cite{HvStr}, which
in turn depends heavily on \cite{HMS}. The argument given here is essentially the same, but packaged
to emphasize the role of the group $\kappa_n$ of exotic elements in the Picard group
and the cohomology group $H^1(\GG_n,E_0^\times)$. We'll give a proof in Proposition \ref{pic0-zn} below.

\begin{rem}\label{tow-gen-Moore} Using nilpotence technology derived from \cite{HS99} and working as in \cite{HvStr} \S  4
we can choose a sequence of ideals
$J(i) \subseteq \mm \subseteq E_0$ and spectra
$S/J(i)$ with the following properties:
\begin{enumerate}

\item $J(i+1) \subseteq J(i)$ and $\cap\ J(i) = 0$;

\item $E_0/J(i)$ is finite;

\item $E_0(S/J(i)) \cong E_0/J(i)$ and there are maps $q: S/J(i+1) \to S/J(i)$ realizing the quotient map
$E_0/J(i+1) \to E_0/J(i)$;

\item there are maps $\eta=\eta_i:S^0 \to S/J(i)$ inducing the quotient map $E_0 \to E_0/J(i)$ and $q\eta_{i+1} = \eta_i:
S/J(i) \to S/J(i)$; 

\item if $X$ a finite type $n$-spectrum, then the map $X \to \holim\ X \wedge S/J(i)$ induced by the maps
$\eta$ is an equivalence; and, 

\item the $S/J(i)$ are $\mu$-spectra; that is, there are maps $\mu: S/J(i) \wedge S/J(i) \to S/J(i)$
so that $\mu(\eta \wedge 1) = 1:S/J(i) \to S/J(i)$. 
\end{enumerate} 

They also prove that items (1)-(5) characterize the tower $\{S/J(i)\}$ up to equivalence
in the pro-category of towers under $S^0$. See
Proposition 4.22 of \cite{HvStr}. 

Hovey and Strickland choose the $J(i)$ with the property that there are positive integers $a_0,a_1,\ldots,a_{n-1}$
(depending on $i$) so that
\[
J(i)  = (p^{a_0},u_1^{a_1},\ldots,u_{n-1}^{a_{n-1}}).
\]
They don't quite say it explicitly, but in their construction they choose the $a_i$, $i \geq 1$, to be powers of $p$. 
\end{rem}

\begin{rem}\label{pic-top} Let $G(i) \subseteq \Pic^0_{K(n)}$ be the set of equivalence classes $X$ which
can be given a $K(n)$-local equivalence
\[
X \wedge S/J(i) \simeq S/J(i).
\]
Item (6) of Remark \ref{tow-gen-Moore} is used to show $G(i+1) \subseteq G(i)$. 
By Proposition 14.2 of \cite{HvStr} $G(i)$ is a finite index subgroup and $\cap\ G(i) = \{L_{K(n)}S^0\}$; thus the subgroups
$G(i)$ define a separated profinite topology on $\Pic^0_{K(n)}$.
\end{rem}

\begin{lem}\label{ev-is-cont} The evaluation map
\[
e:\Pic^0_{K(n)} \longr H^1(\GG_n,E_0^\times)
\]
is a continuous homomorphism of profinite abelian groups.
\end{lem}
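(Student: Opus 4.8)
The plan is to show continuity by proving that $e$ is compatible with the two profinite filtrations in play: the filtration of $\Pic^0_{K(n)}$ by the subgroups $G(i)$ from Remark \ref{pic-top}, and the $\mm$-adic filtration on $H^1(\GG_n,E_0^\times)$ coming from $E_0^\times \cong \lim_k (E_0/\mm^k)^\times$ and the fact that continuous cohomology of $\mm$-complete Morava modules is computed as an inverse limit over $k$ (Remark \ref{Morava-mods}). Since a homomorphism between profinite groups is continuous exactly when the preimage of every open subgroup is open, and the open subgroups of $H^1(\GG_n,E_0^\times)$ are cofinal among the kernels of the maps to $H^1(\GG_n,(E_0/\mm^k)^\times)$, it suffices to show: for each $k$ there is an $i$ so that $X \in G(i)$ implies $e(X)$ maps to the identity in $H^1(\GG_n,(E_0/\mm^k)^\times)$ — equivalently, that $e$ carries $G(i)$ into the $k$-th filtration stage of the target for $i$ large.

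First I would recall that $\Pic^0_{K(n)}$ and $H^1(\GG_n,E_0^\times)$ are profinite abelian groups — for the source this is Remark \ref{pic-top}, and for the target one uses that $E_0^\times = \lim_k(E_0/\mm^k)^\times$ with each $(E_0/\mm^k)^\times$ finite (as $E_0/\mm^k$ is a finite ring by $E_0/\mm = \FF_{p^n}$ and $\mm$ finitely generated), together with the identification $H^1(\GG_n,E_0^\times) \cong \lim_k H^1(\GG_n,(E_0/\mm^k)^\times)$, which holds because the Morava module $E_0^\times$ is $\mm$-complete and cohomology commutes with such limits. Next, given $k$, choose $i$ so that $J(i) \subseteq \mm^k$; this is possible by item (1) of Remark \ref{tow-gen-Moore}, $\cap J(i)=0$, together with the fact that the $J(i)$ form a decreasing system of open ideals, so they are cofinal among powers of $\mm$. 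Then if $X \in G(i)$, there is a $K(n)$-local equivalence $X \wedge S/J(i) \simeq S/J(i)$, which on $E_0$-homology gives an isomorphism of Morava modules $E_0X \otimes_{E_0} E_0/J(i) \cong E_0/J(i)$. Picking a generator $a$ of the free rank-one module $E_0X$ with crossed homomorphism $\phi = e(X)$, the image of $a$ in $E_0X \otimes E_0/J(i) \cong E_0/J(i)$ is a unit times the canonical generator, and the equation $ga = \phi(g)a$ shows that the reduction of $\phi$ modulo $J(i)$ is a coboundary in $H^1(\GG_n,(E_0/J(i))^\times)$; pushing further down to $E_0/\mm^k$, the class $e(X)$ dies in $H^1(\GG_n,(E_0/\mm^k)^\times)$. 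Hence $e(G(i))$ lands in the $k$-th filtration stage of the target, which is what we needed.

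Actually, to be careful about the claim ``the image of $a$ is a unit times the canonical generator'': the equivalence $X \wedge S/J(i) \simeq S/J(i)$ is only an equivalence of spectra, not a priori one respecting chosen generators, but any $E_0/J(i)$-module isomorphism $E_0X/J(i) \to E_0/J(i)$ of Morava modules sends $a$ to some unit $\beta$, and then $g\beta = e(X)(g)\cdot {}^{g}(\text{action on }1)\cdot$... — more precisely, since $1 \in E_0/J(i)$ is $\GG_n$-invariant and the iso is $\GG_n$-equivariant, $e(X)(g) \equiv \beta^{-1}\cdot g(\beta) \bmod J(i)$, exhibiting the reduction of $e(X)$ as the coboundary of $\beta$. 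This is the only delicate point; everything else is assembling cofinality statements. The main obstacle, then, is not any single hard estimate but marshalling the compatibility of the three filtrations — the $G(i)$, the $J(i)$, and the $\mm^k$ — and confirming that the open subgroups of the target are governed by the $\mm^k$-reductions; once those are in place the argument is a direct diagram chase. As the excerpt notes, an alternative is simply to quote Proposition 14.3.d of \cite{HvStr}, but the filtration-comparison argument above makes the role of the $J(i)$-tower transparent.
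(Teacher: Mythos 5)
Your proposal is correct and follows essentially the same route as the paper: identify $H^1(\GG_n,E_0^\times)$ as the inverse limit of the groups $H^1(\GG_n,(E_0/J(i))^\times)$ and show that $e$ carries $G(i)$ into the kernel of reduction modulo $J(i)$, which is exactly the factorization $\Pic^0_{K(n)}/G(i)\to H^1(\GG_n,(E_0/J(i))^\times)$ the paper asserts; your coboundary computation just makes that assertion explicit. The only point where the paper is slightly more careful is in justifying the passage of $H^1$ through the inverse limit, which requires $\lim^1 H^0(\GG_n,(E_0/J(i))^\times)=0$, supplied by the finiteness of $E_0/J(i)$.
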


\begin{proof} Parts (1) and (2) of Remark \ref{tow-gen-Moore}
imply the ideals $\{ J(i)\}$ define the same topology as $\{ \mm^k\}$ on $E_0$. By Part (2) of  Remark \ref{tow-gen-Moore}
we have $H^0(\GG_n,E_0/J(i)^\times)$ is finite, hence $\lim^1 H^0(\GG_n,E_0/J(i)^\times) = 0$ and the
$\lim$-$\lim^1$ short exact sequence in cohomology gives
\[
H^1(\GG_n,E_0^\times) \cong \lim H^1(\GG_n,E_0/J(i)^\times). 
\]
By definition the evaluation map $e$ factors
\[
e:\Pic^0_{K(n)}/G(i) \longr H^1(\GG_n,E_0/J(i)^\times)\ .
\]
The result follows.
\end{proof}

\begin{rem}\label{basics-prof} Let $A = \lim A_i$ be a profinite abelian group. 
Let $U_i \subseteq A$ be the open and closed subgroups with $A/U_i \cong A_i$. If $B \subseteq A$ is a subgroup,
let $V_i = B \cap U_i$ and $B_i = B/V_i$. Then $B_i \subseteq A_i$ is a subgroup and $B \to \lim B_i$ is the closure of $B$ in 
$A$; thus $B$ is closed if and only if $B = \lim B_i$. Note that $B$ is then profinite. If $B$ is closed then
$A/B = \lim A_i/B_i$ since $\lim^1 B_i = 0$; hence $A/B$ is also profinite. 

From this we see that if $f:A \to B$ is a continuous map of profinite abelian groups then both the kernel and
image of $f$ are closed. 
\end{rem}

\begin{rem}\label{when-zn} We now write down a criterion for identifying $\RC$-modules. 
Let $A = \lim A_i$ be a profinite abelian group. Since the groups $A_i$ are discrete,
$A$ is a continuous $\RC$-module if and only if the following criterion holds:
\begin{quote}
(Criterion 1)  Let $x = (x_i) \in \lim A_i = A$. Then for all $i$ there is an integer $N_i$ so that $p^{N_i}(p^n-1)x_i = 0 \in A_i$.
\end{quote}

This can be deduced directly from the definitions. 

From Criterion 1 it follows that if $B \subseteq A$ is a closed subgroup and $A$ is $\RC$-module, then both
$B$ and $A/B$ are $\RC$-modules. There is a partial converse as well. We will say
a profinite abelian group $A=\lim A_i$ is a pro-$p$-group if for all $i$ the group $A_i$ is a finite abelian $p$-torsion group. 
Note that by Criterion  1 any abelian pro-$p$-group is a $\RC$-module.
Suppose
\[
0 \to A \to B \to C \to 0.
\]
is a short exact sequence of profinite abelian groups, $C$ is an $\RC$-module and $A$ is pro-$p$-group.
Then $B$ is a $\RC$-module. 

To prove this converse statement,  first note that by reindexing if necessary we can
write the short exact sequence as the inverse limit of a tower of short exact sequences
\[
0 \to A_i \to B_i \to C_i \to 0.
\]
Let $b = (b_i) \in B$ and  $(c_i)$ its image in $C$. Then for all $i$ there is an integer $N_i$ so that
$p^{N_i}(p^n-1)c_i = 0$, whence $p^{N_i}(p^n-1)b_i \in A_i$. By hypothesis there is an integer $M_i$ so that
$p^{M_i}A_i = 0$, or $p^{(M_i+N_i)}(p^n-1)b_i = 0$ as needed. 
\end{rem}

\begin{lem}\label{extend-zn} Suppose $A \to B \to C$ is a sequence of profinite abelian groups so that
$A$ is a pro-$p$-group and $C$ is a $\RC$-module. Then $B$ is a $\RC$-module and the maps
are all $\RC$-module maps.
\end{lem}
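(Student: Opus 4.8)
The plan is to reduce the statement to the short exact sequence case already handled in the converse discussed in Remark~\ref{when-zn}. To do this one must read ``sequence $A \to B \to C$'' as being \emph{exact at $B$}, i.e.\ $\mathrm{image}(A \to B) = \ker(B \to C)$ (for the argument below it would even suffice that $\ker(B \to C) \subseteq \mathrm{image}(A \to B)$); some such hypothesis is genuinely needed, since otherwise one could take $A = C = 0$ and $B = \hat\ZZ$, which is not a $\RC$-module.

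First I would set $A' = \mathrm{image}(A \to B)$. As the continuous image of a profinite group, $A'$ is a closed subgroup of $B$, and since every finite discrete quotient of $A'$ is a finite discrete quotient of $A$ it is a finite abelian $p$-group; hence $A'$ is a pro-$p$-group, and therefore also a $\RC$-module by Criterion~1 of Remark~\ref{when-zn}. Next, $\mathrm{image}(B \to C)$ is a closed subgroup of the $\RC$-module $C$, hence is itself a $\RC$-module by Remark~\ref{when-zn}; exactness at $B$ identifies it, as a profinite abelian group, with $B/A'$. We thus have a short exact sequence of profinite abelian groups
\[
0 \longrightarrow A' \longrightarrow B \longrightarrow B/A' \longrightarrow 0
\]
with $A'$ a pro-$p$-group and $B/A'$ a $\RC$-module, so the converse recorded in Remark~\ref{when-zn} shows that $B$ is a $\RC$-module. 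Since $A$ is itself a pro-$p$-group it too is a $\RC$-module, so all three groups are $\RC$-modules.

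It remains to check that the two maps are $\RC$-module maps, and for this I would invoke the general fact that any continuous homomorphism $f \colon M \to N$ of profinite $\RC$-modules is automatically $\RC$-linear: for fixed $x \in M$, the two continuous functions $a \mapsto f(ax)$ and $a \mapsto a f(x)$ from $\RC$ to $N$ agree on the image of $\ZZ$, which is dense in $\RC$, hence agree everywhere. There is no serious obstacle in any of this; the only things to be careful about are the reading of ``sequence'' as exactness at $B$, the standard point-set facts that continuous images of profinite groups are closed and that closed subgroups and Hausdorff quotients of pro-$p$-groups (resp.\ of $\RC$-modules) inherit the property, and the continuity of the $\RC$-actions used in the last step.
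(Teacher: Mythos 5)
Your proposal is correct and follows essentially the same route as the paper: the paper's one-line proof invokes Remark \ref{basics-prof} to replace $A$ and $C$ by the (closed) image of $A$ in $B$ and the image of $B$ in $C$, reducing to the short exact case, and then applies the partial converse in Remark \ref{when-zn} -- exactly the reduction you spell out, including the implicit reading of ``sequence'' as exact at $B$. Your added density argument for $\RC$-linearity of the maps is a detail the paper leaves unstated but is consistent with its intent.
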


\begin{proof} By Remark \ref{basics-prof}, we may assume that the sequence is short exact and then
we can apply the previous remark.
\end{proof}

We now get to our core observation. 

\begin{prop}\label{pic0-zn} The profinite abelian group $\pico$ is a $\RC$-module and the evaluation map
\[
e:\pico \longr H^1(\GG_n,E_0^\times)
\]
is a continuous map of $\RC$-modules. 
\end{prop}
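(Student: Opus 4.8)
The plan is to combine the continuity statement already established in Lemma \ref{ev-is-cont} with the $\RC$-module criterion packaged in Remark \ref{when-zn} and Lemma \ref{extend-zn}, applied to a short exact sequence that splits $\pico$ into a pro-$p$ piece and a piece that maps into $H^1(\GG_n, E_0^\times)$. The key observation is that the target $H^1(\GG_n, E_0^\times)$ is \emph{already} known to be a continuous $\RC$-module: this is the basic application of Lemma \ref{whyRC} to $S = E_0$ recorded right after its proof. So it suffices to exhibit $\pico$ as an extension of (a closed subgroup of) $H^1(\GG_n, E_0^\times)$ by an abelian pro-$p$-group.

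First I would recall that by Proposition \ref{why-the-hypothesis} the kernel $\kappa_n$ of $e$ need \emph{not} be zero in general --- here we are \emph{not} assuming $2p > \mathrm{max}\{n^2+1,2n+2\}$ --- so the relevant structure is the short exact sequence of profinite abelian groups
\[
0 \longr \kappa_n \longr \pico \mathop{\longr}^{e} \mathrm{im}(e) \longr 0,
\]
where by Remark \ref{basics-prof} and Lemma \ref{ev-is-cont} the subgroup $\kappa_n$ and the image $\mathrm{im}(e)$ are both closed, hence profinite, and $\mathrm{im}(e)$ is a closed subgroup of the $\RC$-module $H^1(\GG_n, E_0^\times)$, so by Criterion 1 it is itself an $\RC$-module. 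The remaining point is that $\kappa_n$ is an abelian pro-$p$-group: one shows that the filtration quotients $\kappa_n \cap G(i)$ / $\kappa_n \cap G(i+1)$ (with $G(i)$ as in Remark \ref{pic-top}) are finite $p$-groups. This is where I expect to lean on the standard fact --- exactly the argument behind Proposition \ref{why-the-hypothesis}, but now run without the vanishing hypotheses --- that exotic elements of the Picard group are detected in the Adams--Novikov spectral sequence by classes in filtration $\geq 1$ living in the subquotients $H^{s+1}(\GG_n, E_s)$ with $s \geq 1$, and that by the bounded torsion statement (part (2) of Remark \ref{facts-on-ANSS}) together with sparseness (part (1)), these groups are $p$-torsion of bounded exponent. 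Concretely: $E_s = 0$ unless $2(p-1) \mid s$, and when $s = 2t \neq 0$ the group $H^\ast(\GG_n, E_{2t})$ is annihilated by a power of $p$; so each $\kappa_n \cap G(i) / \kappa_n \cap G(i+1)$, being built from finitely many such subquotients, is a finite $p$-group. Hence $\kappa_n = \lim_i \kappa_n/(\kappa_n \cap G(i))$ is an abelian pro-$p$-group.

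Granting that, Lemma \ref{extend-zn} applied to $\kappa_n \to \pico \to \mathrm{im}(e)$ immediately gives that $\pico$ is a continuous $\RC$-module and that all three maps are $\RC$-module maps; in particular $e: \pico \to H^1(\GG_n, E_0^\times)$ is a map of $\RC$-modules, which is the assertion. (Continuity of $e$ was already Lemma \ref{ev-is-cont}.)

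\textbf{Main obstacle.} The routine part is the bookkeeping with the towers $\{J(i)\}$ and $\{G(i)\}$ and quoting Remark \ref{basics-prof}. The one genuinely substantive step is the claim that $\kappa_n$ is pro-$p$ --- i.e. that the associated graded of the $G(i)$-filtration on $\kappa_n$ consists of finite $p$-groups. I would prove this by identifying $\kappa_n \cap G(i) / \kappa_n \cap G(i+1)$, via the $\mu$-spectrum structure of Remark \ref{tow-gen-Moore}(6) and a comparison of $S/J(i)$-based Picard data, with a subquotient of an Adams--Novikov $E_2$-term of the form $\bigoplus_{s \geq 1} H^{s+1}(\GG_n, E_s \otimes_{E_0} J(i)/J(i+1))$, and then invoking sparseness plus bounded torsion (Remark \ref{facts-on-ANSS}, parts (1) and (2)) to see each such group is a finite $p$-group. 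This is essentially the content of \cite{HvStr} Proposition 14.3.d; if one prefers, one can simply cite that result for the pro-$p$-ness of $\kappa_n$ and then the rest of the argument is as above.
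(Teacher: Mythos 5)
Your proposal follows the same skeleton as the paper's proof: reduce via Lemma \ref{ev-is-cont} to showing $\pico$ is a continuous $\RC$-module, then apply Lemma \ref{extend-zn} to the sequence $\kappa_n \to \pico \to H^1(\GG_n,E_0^\times)$, with the whole weight resting on the claim that $\kappa_n$ is an abelian pro-$p$-group. The one place you diverge is exactly at that claim, and there the paper's argument is both simpler and more complete than your sketch. Rather than filtering $\kappa_n$ by the subgroups $\kappa_n \cap G(i)$ and identifying the graded pieces with Adams--Novikov subquotients built from $J(i)/J(i+1)$ --- an identification you rightly flag as the main obstacle and do not carry out, and whose precise form is not obviously the direct sum you write down, since comparing trivializations over $S/J(i+1)$ and $S/J(i)$ involves more than tensoring with $J(i)/J(i+1)$ --- the paper filters $\kappa_n$ by Adams--Novikov filtration directly: for $X \in \kappa_n$ a choice of isomorphism $E_0X \cong E_0$ gives a class $\iota_X \in H^0(\GG_n,E_0X)$, one sets $F_s = \{X \in \kappa_n \mid d_r(\iota_X)=0,\ r<s\}$, and $d_s$ injects $F_s/F_{s+1}$ into $E_s^{s,s-1}$, a subquotient of $H^s(\GG_n,E_{s-1})$. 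Sparseness and bounded torsion (parts (1) and (2) of Remark \ref{facts-on-ANSS}) make each of these groups bounded $p$-torsion, and the uniform horizontal vanishing line (part (3)) makes the filtration finite, so one gets the stronger conclusion that $p^K\kappa_n=0$ for a single $K$; combined with the fact that $\kappa_n$ is closed in $\pico$, hence profinite, this yields pro-$p$-ness without ever touching the tower $\{S/J(i)\}$. Your fallback of citing Proposition 14.3.d of \cite{HvStr} is legitimate --- the paper itself notes the result can be deduced from it --- but if you want a self-contained argument, the $F_s$-filtration is the cleaner route.
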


\begin{proof} By Lemma \ref{ev-is-cont} we need only show $\pico$ is a continuous $\RC$-module. 
We apply Lemma \ref{extend-zn} to the exact sequence
\[
0 \to \kappa_n \to \pico \to H^1(\GG_n,E_0^\times). 
\]
For this to work we need to know that $\kappa_n$ is pro-p-group. Since $e$ is a continuous map between
profinite groups, its kernel $\kappa_n$ is a profinite group.  So it suffices to show that there is an integer $K$
so that $p^K\kappa_n = 0$. The argument is standard; it can be deduced from \cite{HMS} and see also, for example, 
\cite{DrewHeardThesis}. Here are the details.

If $X \in \kappa_n$, then $E_0 X \cong E_0$ as a Morava module. A choice of isomorphism defines a class
\[
\iota_X \in H^0(\GG_n,E_0X).
\]
The choice of $\iota_X$ is unique up to the group of automorphisms $\Aut(E_0)\cong \ZZ_p^\times$ of  $E_0$ as a
Morava module; therefore, we have that $X \simeq L_{K(n)}S^0$ if and only $\iota_X$ is a permanent cycle in
the Adams-Novikov Spectral Sequence for $X$. Define a filtration of $\kappa_n$ by setting 
\[
F_s = \{ X \in \kappa_n\ |\ d_r(\iota_X) = 0,\ r < s\ \}.
\]
Then $F_{s+1} \subset F_{s}$ and $d_s$ defines an injection
\[
F_s/F_{s+1} \longr E_s^{s,s-1}
\]
By part (3) of Remark \ref{facts-on-ANSS} we have a uniform
vanishing line for  the  Adams-Novikov Spectral Sequence; hence, this filtration is finite. To finish the proof we need to show
$F_s/F_{s-1}$ is of bounded $p$-torsion for all $s$.

The group $E_s^{s,s-1}$ is a subquotient of $H^s(\GG_n,E_{s-1})$.  We now use parts (1) and (2) of
Remark \ref{facts-on-ANSS} to conclude $H^s(\GG_n,E_{s-1})$ is bounded $p$-torsion and the
result follows.
\end{proof}

\begin{rem}\label{bound-K}It is possible to give a crude upper bound on the integer $K$ so that $p^K\kappa_n=0$ by 
summing the upper bounds of the integers $k_s$ so that $p^{k_s}$ annihilates $H^s(\GG_n,E_{s-1})$. 

Furthermore,  in our admittedly very few known examples, $\kappa_n$ is a finite group. For $n=1$ and $p=1$,
see \cite{HMS}. For $n=2$ and $p=3$, see \cite{GHMRPic}. For $n=2$ and $p=2$, the cohomology calculations
of \cite{BGH} show $\kappa_2$ must be finite.
\end{rem}

\begin{rem}\label{p-adic-spheres} The homomorphism $\ZZ \to \pico$ sending $n$ to $S^{2n}$ extends to a
homomorphism of profinite $\RC$-modules $\RC \to \pico$, which we write as $a \mapsto S^{2a}$. 
This first appeared in \cite{HMS}; the argument there can be distilled from the one given here;
in particular it uses the tower $\{ S/J(i) \}$.

Each of the spectra $S/J(i)$ is a type $n$ complex and, hence, there
is an integer $N_i$ so that $S/J(i)$ has $v_n^{p^{N_i}}$-self  map. Then
$S^{2p^{N_i}(p^n-1)} \in G(i)$. We can arrange for $N_{i+1} \geq N_i$ and $N_i \to \infty$ as $i \to \infty$.
Then the map $\ZZ \to \pico$ sending $n$ to $S^{2n}$ extends to map
\[
\ZZ \to \RC \cong \lim \ZZ/p^{N_i}(p^n-1) \to \lim \pico/G(i) \cong \pico.
\]
This procedure writes $S^{2a}$ as a homotopy inverse limit in the $K(n)$-local category
\[
S^{2a} \simeq \holim S^{2a_i} \wedge S/J(i)
\]
where $\{ a_i \}$ is a sequence of integers so that $a_i \equiv a$ modulo $2p^{N_i}(p^n-1)$. The
transition maps in this tower are given by the maps
\begin{equation}\label{transition-maps}
\xymatrix{
S^{2a_{i+1}} \wedge S/J(i+1) \rto^-q & S^{2a_{i+1}} \wedge S/J(i) \ar[r]^-{v} & S^{2a_i} \wedge S/J(i)
}
\end{equation}
where we have write $a_{i+1} = a_i + 2m_ip^{N_i}(p^n-1)$ and $v$ is a $v_n^{p^{N_i}m_i}$-self map. 
\end{rem}

We can now record the following result. 

\begin{prop}\label{reduced-by-vn} Let $X$ be a type $n$ complex with $v_n^{p^k}$-self map. Let $a \in \RC$
and let $a_k$ be any integer so that $a_k \equiv a$ modulo $p^k(p^n-1)$. Then there is $K(n)$-local
equivalence
\[
S^{2a} \wedge X \simeq \Sigma^{2a_k}X.
\]
\end{prop}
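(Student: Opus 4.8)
The plan is to realize $S^{2a}\wedge X$ as a homotopy inverse limit built from the tower $\{S/J(i)\}$ of Remark~\ref{tow-gen-Moore} and then collapse that tower using the self-map on $X$. First I would observe that since $X$ is a finite type $n$ complex, item~(5) of Remark~\ref{tow-gen-Moore} gives a $K(n)$-local equivalence $X\simeq\holim_i\, X\wedge S/J(i)$. Smashing with the $\RC$-sphere $S^{2a}$ and using the description of $S^{2a}$ as $\holim_i S^{2a_i}\wedge S/J(i)$ from Remark~\ref{p-adic-spheres} (together with the fact that $S^{2a}\wedge S/J(i)\simeq S^{2a_i}\wedge S/J(i)$, which holds because $S^{2a}\in G(i)$ once $a_i\equiv a$ modulo $2p^{N_i}(p^n-1)$ and the $S/J(i)$ are $\mu$-spectra), I would identify
\[
S^{2a}\wedge X\ \simeq\ \holim_i\ \Sigma^{2a_i}X\wedge S/J(i),
\]
where the transition maps are the evident ones: the quotient $q$ composed with a suspension of a $v_n$-power self-map of $S/J(i)$, exactly as in \eqref{transition-maps} but now smashed with $X$.

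Next I would use the $v_n^{p^k}$-self map $f\colon\Sigma^{2p^k(p^n-1)}X\to X$ of $X$ itself. Choosing the integers $a_i$ so that $a_i\equiv a_k$ modulo $p^k(p^n-1)$ for $i$ large — which is possible since $p^k(p^n-1)\mid p^{N_i}(p^n-1)$ once $N_i\ge k$ — I can write $a_i = a_k + m_i p^k(p^n-1)$ with $m_i\ge 0$ and $m_i\to\infty$. Then iterating $f$ gives a $K(n)$-local equivalence $\Sigma^{2a_i}X\simeq\Sigma^{2a_k}X$ for every such $i$ (a $v_n$-power self-map of a type $n$ complex is a $K(n)$-equivalence, hence a $K(n)$-local equivalence). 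These equivalences are compatible with the transition maps in the tower — here one has to check that the square relating the $v_n$-power self-map of $S/J(i)$ appearing in the tower and the iterated self-map $f^{m_i}$ of $X$ commutes up to homotopy after smashing, which follows from the standard asymptotic uniqueness of $v_n$-self maps (the two composites both induce the same power of $v_n$ on $K(n)_\ast$, so by \cite{HS99} they agree after smashing with $S/J(j)$ for $j$ large, and we are free to re-index the tower). Having matched the two towers levelwise and compatibly, I would conclude
\[
S^{2a}\wedge X\ \simeq\ \holim_i\ \Sigma^{2a_k}X\wedge S/J(i)\ \simeq\ \Sigma^{2a_k}X,
\]
the last equivalence again by item~(5) of Remark~\ref{tow-gen-Moore} applied to $\Sigma^{2a_k}X$.

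The main obstacle is the compatibility of the self-map identifications with the transition maps of the $\holim$ tower: a priori the square commutes only after smashing with enough of the tower, so one must re-index and pass to a cofinal subtower to get strict commutativity, and one should be a little careful that the resulting tower still computes the same homotopy limit. Everything else — the behavior of $S^{2a}\wedge(-)$ on the $S/J(i)$, the fact that a $v_n$-power self-map is a $K(n)$-local equivalence, and the reconstruction of a finite type $n$ spectrum from its smashes with $S/J(i)$ — is either in Remark~\ref{tow-gen-Moore} or an immediate consequence of \cite{HS99} and \cite{HvStr}.
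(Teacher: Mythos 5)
Your argument is correct in outline but takes a genuinely different route from the paper's. The paper disposes of the tower in one stroke: by Proposition 14.3.c of \cite{HvStr} the subgroup $G_X = \{P \in \pico \mid P \wedge X \simeq X\}$ is of finite index in $\pico$, hence contains $G(i)$ for some $i$; since $S^{2(a-a_i)} \in G(i)$ whenever $a_i \equiv a$ modulo $p^{N_i}(p^n-1)$, this immediately gives $S^{2a}\wedge X \simeq \Sigma^{2a_i}X$, and a single application of the $v_n^{p^k}$-self map of $X$ then converts $a_i$ to $a_k$. You instead unpack what lies behind that citation, identifying $S^{2a}\wedge X$ with $\holim_i\, \Sigma^{2a_i}X \wedge S/J(i)$ and collapsing the tower by hand. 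This is more self-contained, but it forces you to confront exactly the compatibility question that the Hovey--Strickland result packages away, and that is where your write-up needs repair.

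Two specific points. First, the justification you offer for the commutativity of the squares is not the right form of the Hopkins--Smith uniqueness: two $v_n$-self maps of a type $n$ complex inducing the same power of $v_n$ on $K(n)_\ast$ become homotopic after being raised to a sufficiently large $p$-th power, not after smashing with $S/J(j)$ for large $j$. The re-indexing you mention is in fact the whole content of the fix --- one passes to a cofinal subtower along which the composite transition maps are large enough powers for asymptotic uniqueness to apply, obtaining a pro-equivalence of towers and hence an equivalence of homotopy limits --- so this should be stated as the argument rather than as a supplementary remark. Second, a small slip: $S^{2a}\wedge S/J(i) \simeq S^{2a_i}\wedge S/J(i)$ because $S^{2(a-a_i)} \in G(i)$, not because $S^{2a}\in G(i)$ (the latter would assert $S^{2a}\wedge S/J(i)\simeq S/J(i)$). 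With those corrections your proof goes through; in effect it reproves, for this particular $X$, the finite-index statement that the paper simply quotes from \cite{HvStr}.
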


\begin{proof} By Proposition 14.3.c of \cite{HvStr} the subgroup
\[
G_X = \{P \in \pico\ |\ P \wedge X \simeq X\ \} \subseteq \pico
\]
is of finite index, so there is an $i$ so that $G(i) \subseteq G_X$. In particular, if $S/J(i)$ has 
$v_n^{p^{N_i}}$ self map, then $S^{2p^{N_i}(p^n-1)} \in G_X$. It follows that if $a_i$ is an integer so that 
$a \equiv a_i$ modulo $2p^{N_i}(p^n-1)$ then there is $K(n)$-local equivalence
\[
S^{2a} \wedge X \simeq S^{2a_i} \wedge X.
\]
If we assume $N_i \geq k$, we can then use the $v_n^{p^k}$-self map of $X$ to write $ S^{2a_i} \wedge X
\simeq S^{2a_k} \wedge X$.
\end{proof}

\begin{rem}\label{reduced-by-vn-alg} Proposition \ref{reduced-by-vn} has the following algebraic analog.
Let $a \in \RC$ and let $a_k$ be any integer so that $a \equiv a_k$ modulo $p^k(p^n-1)$.
Then there is an isomorphism of Morava modules
\[
E_0(S^{a})/\mm^k \cong E_0(S^{a_k})/\mm^k
\]
where $\mm \subseteq E_0$ is the maximal ideal.
This can be also be proved by a calculation with crossed homomorphisms. 

Note that
\[
E_\ast S^{2a} \cong \lim\ \Sigma^{2a_i} E_\ast/J(i)
\]
is a free $E_\ast$-module generated by an element $x_a$ in degree $0$ which reduces to the canonical
generator $u^{a_k} \in \Sigma^{2a_k} E_0/J(I)$. We conclude that for $g \in \GG_n$
$$
g_\ast x_a = t_0(g)^a x_a.
$$
The function $t_0$ was defined in Remark \ref{action-forms}.
\end{rem}

\begin{rem}\label{get-compl-straight}The ring $\RC = \wcom$ is a somewhat unusual completion of the integers; 
thus, it might be  worthwhile to analyze it and various of the elements therein.
The split short exact sequences
$$
\xymatrix{
0 \rto &\ZZ/p^k \ar[rr]^-{\times (p^n-1)} && \ZZ/p^k(p^n-1) \rto& \ZZ/(p^n-1) 
\rto &0
}
$$
assemble to give a split short exact sequence
$$
\xymatrix{
0 \rto &\ZZ_p \ar[rr]^-{\times (p^n-1)} &&\RC \rto& \ZZ/(p^n-1)
\rto &0\ .
}
$$
To give a specific element of order $p^n-1$ in $\RC$, notice that 
\begin{align*}
p^{nk} = p^{n(k-1)}(p^n-1) + p^{n(k-1)}.
\end{align*}
and, hence, $p^{nk}  \equiv p^{n(k-1)}$ modulo $p^{n(k-1)}(p^n-1)$. Thus the elements
$$
p^{nk} \in \ZZ/p^{nk}(p^n-1)
$$
assemble to give an element $\alpha$ of order $p^n-1$ in the inverse limit. We will use the notation
\[
\alpha = \lim p^{nk} \in \RC.
\]
In using this limit notation, we must be wary of the different completions. We have  $0 = \lim p^{nk} \in \ZZ_p$, but
$0 \ne \alpha = \lim p^{nk} \in \RC$.

We can also analyze an element of $\RC$ which will appear in the main algebraic
result, Proposition \ref{detred-alg}. Recall  that $r(n) = (p^n-1)/(p-1)$. Define
$$
\lambda = \lim_k\ p^{nk}r(n) \in \lim_k \ZZ/p^{nk}(p^n-1) = \RC.
$$
Then
$$
\lambda  = \alpha r(n) \in  \RC.
$$
Since $r(n)(p-1) = p^n-1$ we have that $\lambda$ is torsion of exact order $p-1$.
\end{rem}

\begin{rem}\label{closer-to-HS} In the spirit of \cite{HSad}, we could begin with the Johnson-Wilson theory $E(n)$
with
\[
E(n)_\ast \cong \ZZ_{(p)}[v_1,\ldots,v_{n-1},v_n^{\pm 1}].
\]
This is a complex oriented theory with obvious $p$-typical formal group law; compare (\ref{gndef}). 
There is a map $E(n) \to E_n$ of complex oriented theories inducing an equivalence
\[
L_{K(n)}E(n) \simeq E_n^{hF}
\]
where $E^{hF}$ is the Devinatz-Hopkins fixed points \cite{DH} with respect to
\[
F = \FF_{p^n}^\times \rtimes \Gal(\FF_{p^n}/\FF_p) \subseteq \GG_n.
\]
Since $E(n)$ and $E^{hF}$ are exactly $v_n$-periodic, we have a degree function
\[
\mathrm{deg}:\Pic_{K(n)} \longr \ZZ/2(p^n-1)
\]
sending $X$ to the degree of generator of $\pi_\ast L_{K(n)}(E(n) \wedge X) \cong E_\ast^{hF}X = (E_\ast X)^F$. 
Proposition \ref{pic0-zn} now implies the kernel of this map is an abelian pro-$p$-group and, in particular,
a continuous module over $\ZZ_p$. This is exactly the statement of  Proposition 14.3.d of \cite{HvStr}.

If $X$ has degree zero, then $(E^{hF})_0X$ is an invertible $(E^{hF})_0E^{hF}$-comodule in some appropriate category
of completed $(E^{hF})_0$-modules, but since $F$ is not normal in $\GG_n$ this category of comodules is not
obviously a category of modules over some group. Thus the analog of the evaluation map (\ref{exotic-e})
is a bit harder to define and analyze.
\end{rem}

\section{The determinant and the reduction modulo $p$}

In (\ref{invaru0}) we introduced the crossed homomorphism $t_0:\GG_n \to E_0^\times$ defining the
invertible Morava module $E_0S^2$. The determinant homomorphism
defines another invertible module, this time not arising from the unlocalized stable category. The purpose
of the section is to write down exactly how these two modules are related. See Proposition \ref{detred-alg},
Theorem \ref{large-p-det}, and Proposition \ref{pic-mod-p-n=2}.

\begin{rem}[{\bf The determinant}]\label{det-defined} Let  $\End (H_n/\FF_{p^n})$ be
the endomorphism ring of the Honda formal group law over $\FF_{p^n}$. By Theorem A2.2.18 of \cite{RavVeryGreen},
there is an isomorphism
$$
\WW\langle S\rangle/(S^n-p,Sa=a^\phi S) \cong \End (H_n/\FF_{p^n}) 
$$
where $S=x^p  \in \End (H_n/\FF_{p^n})$ and $a \mapsto a^\phi$ is the lift of the Frobenius
of $\FF_{p^n}$ to $\WW = W(\FF_{p^n})$.

The right action of 
$$
\SS_n = \Aut(H_n/\FF_{p^n}) \subseteq \End (H_n/\FF_{p^n}) 
$$
on the endomorphism ring determines a composition
\[
\xymatrix{
\SS_n \rto^-A & \mathrm{Gl}_n(\WW) \rto^-{\mathrm{det}} & \WW^\times
}
\]
where the last map is the determinant. The matrix representation of a typical element
$$
g=a_0 + a_1S + \cdots a_{n-1}S^{n-1} \in \SS_n
$$
with respect to the $\WW$-basis $\{S^i\ \ |\ 0 \leq i \leq n-1\}$ of the endomorphism ring is
\[
A(g) = \begin{bmatrix} 
    a_{0} & pa_{n-1}^\phi & \dots &pa_{1}^{\phi^{n-1}}\\
     a_{1} & a_{0}^\phi & \dots & pa_{2}^{\phi^{n-1}}\\
    \vdots &\vdots &\ddots &\vdots \\
    a_{n-1} &  a_{n-2}^\phi&\dots& a_{0}^{\phi^{n-1}} 
    \end{bmatrix}\ .
\]
From this we conclude that 
\begin{equation}\label{whyrn}
\det A(g) \equiv a_0^{r(n)}\qquad \mathrm{modulo}\ p
\end{equation}
with $r(n) = (p^n-1)/(p-1)$. In addition, we have that $\det A(g) \in \ZZ_p^\times$; hence,
we get a homomorphism
$$
\SS_n \rtimes \Gal(\FF_{p^n}/\FF_p) \longr \ZZ_p^\times \times \Gal(\FF_{p^n}/\FF_p)
\mathop{\longrightarrow}^{p_1} \ZZ_p^\times
$$
which we call $\det$. This defines an action of $\GG_n$ on $\ZZ_p$ which we write 
$\ZZ_p[\det]$ and if $M$ is any Morava module we define a new Morava module
$$
M[\det] = M \otimes \ZZ_p[\det]
$$
with the diagonal action. The invertible module $E_0[\det]$ determines the element of
$H^1(\GG_n,E_0^\times)$ defined by the homomorphism given by the composition
$$
\xymatrix{
\GG_n \rto^-{\det} &\ZZ_p^\times  \rto^-{\subseteq} & E_0^\times.
}
$$
\end{rem}

\begin{rem}[\bf Realizing the determinant]\label{Sdet-defined}
There is a canonical determinant sphere $S[\det] \in \Pic_{K(n)}$ with $E_0S[\det] \cong E_0[\det]$ as Morava modules.
Complete details are in \cite{BBGS}, but the argument is essentially due to the second author here. Assume $p > 2$. Here
is an outline. Since the classifying space $B\ZZ_p$ is a model for the $p$-completion $S^1$, the evident action
of $\ZZ_p^\times$ on $\ZZ_p$ and suspension define an action of $\ZZ_p^\times$ on
the completed sphere spectrum $S^0_p$. The determinant $\det:\GG_n \to \ZZ_p^\times$ then
gives an action of $\GG_n$ on $S_p^0$. We then can define 
$$
S[\det] = (E\wedge S_p^0)^{h\GG_n}
$$
where we use the diagonal action. The prime $2$ requires a more delicate argument. 
\end{rem}

\begin{rem}\label{Teich} Recall from Remark \ref{get-compl-straight} we have defined numbers
$\alpha = \lim p^{nk}$ and $\lambda = \lim p^{nk}r(n)$ in $\RC$. Let $(S,\mm_k)$ be a complete local ring with
$S/\mm_k \cong \FF_{p^n}$. For any $x \in S^\times$, $x^{p^n} \equiv x$ modulo $\mm_S$, so 
\[
x^{p^{nk}} \equiv x^{p^{n(k-1)}}\quad \mathrm{modulo}\quad \mm^k_S.
\]
Thus
\[
T_1(x) = \lim_k x^{p^{nk}} = x^\alpha \in S
\]
exists and $T_1(x) \equiv x$ modulo $\mm_S$. 
If $x \equiv y$ modulo $\mm_S$, then 
$x^{p^{nk}} \equiv y^{p^{nk}}$ modulo $\mm^k_S$
so we have that $T_1$ factors 
\[
\xymatrix{
S^\times \rto & \FF_{p^n}^\times \rto^{T} & S^\times
}
\]
and $T(xy) = T(x)T(y)$. In particular, $T(x)^{p^n-1} = 1$. The element $T(x)$ is the Teichm\"uller lift of $x$. In
a radical but standard abuse of notation, we often simply write $x$ for $T(x)$. 

We can apply these constructions to $H^1(\GG_n,E_0^\times)$ as well. If $g \in \SS_n$ then
$g = g(x) \in \FF_{p^n}[[x]]$ with leading coefficient $g'(0) \in \FF_{p^n}^\times$. 
Since $t_0(g) \equiv g'(0)$ modulo $\mm$ we have
\begin{equation}\label{fund-alpha-det}
t_0(g)^\alpha = T_1(t_0(g)) = g'(0)^\alpha = g'(0) \in E_0^\times
\end{equation}
and since $\det(g) = g'(0)^{r(n)}$ modulo $\mm$ we have
\[
t_0(g)^\lambda = (t_0(g)^\alpha)^{r(n)} = g'(0)^{r(n)} = \det(g)^\alpha \in E_0^\times.
\]
for all $g$, or $t_0^\lambda = \det^\alpha$ as a crossed homomorphisms on $\SS_n$. Since both functions are constant
on the Galois group, we also have $t_0^\lambda = \det^\alpha$ as crossed homomorphisms on $\GG_n$.
\end{rem}

We now turn to an examination of $X \wedge V(0)$ where $X \in \Pic_{K(n)}$. Here $V(0)$ is the
mod $p$ Moore spectrum.

\begin{lem}\label{fund-seq-exp-alg}Let $S$ be a torsion-free complete local ring with maximal ideal $\mm_S$ and suppose
$S/m_S$ is of characteristic $p>2$.Then there is a short exact sequence
\begin{equation*}
\xymatrix@C=20pt{
0 \rto &S \ar[rr]^-{\mathrm{exp}(p-)} && S^\times \rto & (S/p)^\times \rto &1.
}
\end{equation*}
\end{lem}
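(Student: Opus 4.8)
The plan is to exhibit the map $\mathrm{exp}(p-)$ explicitly and check that it is well-defined, injective, and has image exactly the kernel of the reduction $S^\times \to (S/p)^\times$. First I would observe that since $S$ is $p$-complete (being a complete local ring whose maximal ideal contains $p$, so the $\mm_S$-adic topology is finer than the $p$-adic one and $S \cong \lim S/p^k$ as the relevant completion), the exponential series
\[
\mathrm{exp}(py) = \sum_{i \geq 0} \frac{(py)^i}{i!}
\]
converges in $S$. The key elementary estimate is that for $p > 2$ one has $v_p(p^i/i!) = i - v_p(i!) = i - (i - s_p(i))/(p-1) \geq i(1 - 1/(p-1)) = i(p-2)/(p-1)$, which tends to $\infty$; hence the series converges $p$-adically, and since $S$ is $p$-complete and torsion-free the usual formal identities (in particular $\mathrm{exp}(p(y+y')) = \mathrm{exp}(py)\mathrm{exp}(py')$ and $\mathrm{exp}(\log(1+px)) = 1+px$, $\log(\mathrm{exp}(py)) = py$) hold in $S$. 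This gives a homomorphism from the additive group $S$ to the multiplicative group $1 + pS \subseteq S^\times$.

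Next I would identify $1 + pS$ with the kernel of $S^\times \to (S/p)^\times$: an element of $S^\times$ reduces to $1$ in $(S/p)^\times$ if and only if it lies in $1 + pS$, and conversely every element of $1+pS$ is a unit since $S$ is $p$-adically complete (geometric series). So the sequence
\[
0 \to S \xrightarrow{\mathrm{exp}(p-)} S^\times \to (S/p)^\times \to 1
\]
is exact at $S^\times$ provided $\mathrm{exp}(p-)$ maps $S$ isomorphically onto $1+pS$. Surjectivity onto $1+pS$ follows because the logarithm $\log(1+px) = \sum_{i\geq 1}(-1)^{i-1}(px)^i/i$ also converges for $p>2$ (here $v_p(p^i/i) \geq i - \log_p i \to \infty$) and provides a two-sided inverse; injectivity then follows from $\log \circ \exp = \mathrm{id}$, or directly since $S$ is torsion-free and $\mathrm{exp}(py) = 1$ forces $py = \log(1) = 0$. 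Finally, surjectivity of $S^\times \to (S/p)^\times$ holds because $S \to S/p$ is surjective and a preimage of a unit is automatically a unit in the complete local ring $S$.

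The main obstacle, such as it is, is bookkeeping the convergence and the formal-identity manipulations: one must be careful that $\exp$ and $\log$ genuinely land in $S$ (not merely in $S \otimes \QQ$) and that the classical power-series identities survive in the $p$-complete torsion-free setting, which is exactly where the hypothesis $p > 2$ enters — for $p = 2$ the term $p^2/2! = 2$ has valuation $1$ rather than something growing, and more seriously $p^i/i!$ does not tend to $0$, so the argument genuinely breaks. Everything else is routine: $p$-completeness of $S$ is immediate from $S$ being a complete local ring with $p \in \mm_S$, and torsion-freeness is used only to get injectivity cleanly. I would present this as: (i) recall $S$ is $p$-complete and torsion-free; (ii) define $\exp(p-)$ and $\log(1+p-)$ and check convergence using the valuation estimates, citing that $p>2$ is needed; (iii) verify they are mutually inverse group isomorphisms $S \xrightarrow{\sim} 1+pS$; (iv) identify $1+pS$ with the kernel of reduction and note reduction is surjective.
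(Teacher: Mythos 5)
Your proposal is correct and follows essentially the same route as the paper: identify the kernel of $S^\times \to (S/p)^\times$ with $1+pS$ using that units are detected modulo $\mm_S$, and show $\exp(p-)$ and $(1/p)\log(1+p-)$ are mutually inverse isomorphisms between $S$ and $1+pS$, with $p>2$ guaranteeing convergence. The paper's proof is just a terser version of your steps (i)--(iv).
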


\begin{proof} An element $x \in S$ is a unit if and only if $0 \ne x \in S/\mm_S$. Thus the map $S^\times \to (S/p)^\times$
is onto with kernel $K$ consisting of all elements of the form $1+py$. Note that if $x \in S$, the
power series $\exp(px) = \sum_{n=0}^\infty p^nx^n/n!$ converges since $p > 2$. This defines a homomorphism
\[
\exp(p-):S\ \longr K
\]
with inverse sending $1+py$ to $(1/p)\log(1+py)$. 
\end{proof} 

If we set $S=E_0$ we get an exact sequence in cohomology
\begin{equation}\label{fund-seq-exp}
\xymatrix@C=20pt{
H^1(\GG_n,E_0) \ar[rr]^{\exp(p-)} && H^1(\GG_n,E_0^\times) \rto & H^1(\GG_n,(E_0/p)^\times)\ .
}
\end{equation}

\begin{rem}[{\bf Definition of $\zeta=\zeta_n$}]\label{define-zeta} Let $p > 2$. The Teichm\"uller homomorphism
(see Remark \ref{Teich}) $T:\FF_p^\times \to \ZZ_p^\times$ defines a splitting of the projection
$\ZZ_p^\times \to \FF_p^\times$ and determines an isomorphism
\[
1+ p\ZZ_p \times \FF_p^\times \mathop{\longr}^{\cong} \ZZ_p^\times.
\]
We'll confuse elements in $\FF_p^\times$ with their Teichm\"uller lifts. 

Let $g \in \SS_n$. Since $\det(g) \equiv g'(0)^{r(n)}$ mod $p$, we have
\[
\theta(g) \defeq g'(0)^{-r(n)}\det(g) \in 1+p\ZZ_p.
\]
We now define $\zeta = \zeta_n:\SS_n \to \ZZ_p$ to be the composite
\[
\xymatrix{
\SS_n \rto^-\theta & 1+p\ZZ_p \rto^-\ell_-{\cong} & \ZZ_p
}
\]
where $\ell(1+py)  = (1/p)\log(1+py)$. As with the determinant, $\zeta$ extends to a homomorphism $\zeta:\GG_n \to \ZZ_p$
using the projection:
\[
\xymatrix{
\SS_n \rtimes \Gal(\FF_{p^n}/\FF_p) \rto^-{\zeta \times 1} & \ZZ_p \times \Gal(\FF_{p^n}/\FF_p)
\rto^-{p_1} & \ZZ_p
}
\]
The homomorphism $\zeta$ defines a cohomology class also called $\zeta = \zeta_n \in H^1(\GG_n,\ZZ_p)$.
In a further abuse of notation we also write $\zeta$ for image of this cohomology class
in $H^1(\GG_n,E_0)$ induced by the inclusion $\ZZ_p \subseteq E_0$.
\end{rem}

There is a class $\zeta \in H^1(\GG_n,E_0)$ when $p=2$ as well; it takes a few more words to define. 

The following result
is implicit in various forms in various places. At $n=2$ and $p=3$ the exact sequence
of (\ref{fund-seq-exp}) is used in the work of Karamanov \cite{KaraPic}; see also  the work on the Picard
group in \cite{Lader} \S 5.

Recall that
$$
r(n) = \frac{p^n-1}{p-1} = p^{n-1} + \cdots + p + 1.
$$
Recall also that we are writing the group operation in $H^1(\GG_n,E_0)$ multiplicatively. In the following
result we will confuse a cohomology class with a crossed homomorphism representative. 

\begin{prop}\label{detred-alg} Let $p > 2$ and let $\zeta \in H^1(\GG_n,E_0)$ be the 
class defined in Remark \ref{define-zeta} . Then
$$
\mathrm{exp}(p\zeta) = t_0^{-\lambda}\det \in H^1(\GG_n,E_0^\times)
$$
where
$$
\lambda = \lim_k\ p^{nk}r(n) \in \lim_k\ \ZZ/p^{nk}(p^n-1) = \RC.
$$
\end{prop}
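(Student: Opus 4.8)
The plan is to work entirely with explicit crossed homomorphisms on $\SS_n$ and then extend over the Galois group exactly as in Remark~\ref{Teich}, since every function involved is constant on $\Gal(\FF_{p^n}/\FF_p)$. Fix $g \in \SS_n$ with leading coefficient $c = g'(0) \in \FF_{p^n}^\times$, and recall from Remark~\ref{action-forms} that $t_0(g) \equiv c$ modulo $\mm$, while from (\ref{whyrn}) we have $\det(g) \equiv c^{r(n)}$ modulo $p$. The key computational input is the identity $t_0^\lambda = \det^\alpha$ established in Remark~\ref{Teich}, where $\alpha = \lim p^{nk} \in \RC$ is the idempotent-like element of order $p^n-1$ and $\lambda = \alpha r(n)$. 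I would first rewrite the claimed identity $\mathrm{exp}(p\zeta) = t_0^{-\lambda}\det$ using this: since $\det = \det^\alpha \det^{1-\alpha}$ and $t_0^{-\lambda} = \det^{-\alpha}$, the right-hand side collapses to $\det^{1-\alpha}$, so the claim is equivalent to
\begin{equation*}
\mathrm{exp}(p\zeta(g)) = \det(g)^{1-\alpha} \in E_0^\times \quad \text{for all } g.
\end{equation*}

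Next I would unwind $\zeta$. By Remark~\ref{define-zeta}, $\zeta(g) = \ell(\theta(g))$ where $\theta(g) = c^{-r(n)}\det(g) \in 1+p\ZZ_p$ and $\ell(1+py) = (1/p)\log(1+py)$; hence $\mathrm{exp}(p\zeta(g)) = \exp(\log \theta(g)) = \theta(g) = c^{-r(n)}\det(g)$, using that $\exp$ and $\log$ are mutually inverse on $1+p\ZZ_p$ (valid since $p>2$, as in Lemma~\ref{fund-seq-exp-alg}). So the whole proposition reduces to the purely algebraic claim
\begin{equation*}
c^{-r(n)}\det(g) = \det(g)^{1-\alpha}, \quad \text{i.e.} \quad \det(g)^\alpha = c^{r(n)}.
\end{equation*}
This last identity is immediate from the definition of $\alpha$: $\det(g) \in \ZZ_p^\times$ reduces mod $p$ to $c^{r(n)} \in \FF_{p^n}^\times$ (here we silently replace $c^{r(n)}$ by its Teichm\"uller lift, as in Remark~\ref{Teich}), and raising to the power $\alpha = \lim p^{nk}$ is precisely the Teichm\"uller projection $T_1$ of Remark~\ref{Teich}, so $\det(g)^\alpha = T(\overline{\det(g)}) = T(c^{r(n)}) = c^{r(n)}$ where the last equality holds because $c^{r(n)}$ is already a Teichm\"uller element.

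The one subtlety I would be careful about — and I expect it to be the main point needing attention rather than a genuine obstacle — is matching conventions between the multiplicative group $H^1(\GG_n, E_0^\times)$ and the additive group $H^1(\GG_n, E_0)$, and keeping straight which exponentiations take place in $E_0$ versus in $\ZZ_p$ versus after Teichm\"uller lifting; in particular one must check that $t_0^\lambda$, a priori an $\RC$-power of a crossed homomorphism valued in $E_0^\times$, really does agree with $\det^\alpha$ on the nose and not merely up to a coboundary, which is exactly what Remark~\ref{Teich} supplies. Once the identity is verified on $\SS_n$, extending to $\GG_n$ is automatic since $t_0$, $\det$, and $\zeta$ are all pulled back along the projection $\GG_n \to \SS_n$ composed with evaluation, i.e.\ they factor through the $\SS_n$-part, so no additional Galois cohomology is needed. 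Finally I would remark that the argument uses $p>2$ only through the convergence of $\exp(p-)$ (Lemma~\ref{fund-seq-exp-alg}); everything else is formal, which is consistent with the paper's claim that the algebraic statement needs only $p>2$.
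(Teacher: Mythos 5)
Your proposal is correct and follows essentially the same route as the paper's proof: both unwind $\exp(p\zeta)(g)=g'(0)^{-r(n)}\det(g)$ from the definition of $\zeta$ and then invoke the Teichm\"uller identity $t_0^\lambda=\det^\alpha=g'(0)^{r(n)}$ already established in Remark \ref{Teich}, with your detour through $\det^{1-\alpha}$ being only a cosmetic repackaging of that same identity. The extension from $\SS_n$ to $\GG_n$ is handled in the paper exactly as you describe, by noting all the crossed homomorphisms involved are constant on the Galois factor.
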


\begin{rem} We saw in Remark \ref{get-compl-straight} that the element $t_0^\lambda  \in
H^1(\GG_n,E_0^\times)$ is torsion of exact order $(p-1)$.
\end{rem}

\begin{proof} The class $\zeta$ is defined in Remark \ref{define-zeta} as a class in $H^1(\GG_n,\ZZ_p)$, so we examine
the diagram
\[
\xymatrix@C=40pt{
H^1(\GG_n,\ZZ_p) \rto^{\exp(p-)} \dto & H^1(\GG_n,\ZZ_p^\times) \dto\\
H^1(\GG_n,E_0) \rto^{\exp(p-)} & H^1(\GG_n,E^\times_0)\ .
}
\]
By construction $\exp(p\zeta) \in H^1(\GG_n,\ZZ_p^\times)$ is determined by the function on
$\GG_n \cong \SS_n \rtimes \Gal(\FF_{p^n}/\FF_p)$ given by
\[
(g,\phi) \mapsto g'(0)^{-r(n)}\det(g).
\]
Note that we have written $g'(0) \in \ZZ_p^\times$ for its image under the Teichm\"uller splitting $T:\FF_p^\times \to
\ZZ_p^\times$. From ( \ref{fund-alpha-det}) we have that
\[
t_0(g)^\alpha = g'(0)^\alpha = g'(0)\in E_0^\times .
\]
where $\alpha = \lim p^{nk} \in \RC$. Then for $(g,\phi) \in \SS_n \rtimes \Gal(\FF_{p^n}/\FF_p)$ we conclude
\[
t_0(g,\phi)^\lambda = ((t_0(g)^\alpha)^{r(n)} = g'(0)^{r(n)}.
\]
The result follows.
\end{proof}

Proposition \ref{detred-alg}, the long exact sequence of (\ref{fund-seq-exp}), and the identification of
cohomology classes with invertible modules yield a canonical isomorphism of graded Morava modules
$$
E_\ast (S^{2\lambda})/p \cong E_\ast[\det]/p
$$
and, thus, by arguments similar to those in Remark \ref{alg-pic-zero}, we have the following corollary. 

\begin{thm}\label{large-p-det} Let $2p > \max\{n^2+1,2n+2\}$ and 
$\lambda = \lim_k\ p^{nk}r(n)\in \RC$. Then there is a $K(n)$-local equivalence
$$
S^{2\lambda} \wedge V(0) \simeq S[\det] \wedge  V(0).
$$
\end{thm}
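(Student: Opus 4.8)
The plan is to deduce the topological statement from the algebraic computation in Proposition~\ref{detred-alg} by comparing $E_\ast$-homology of the two smash products and then showing the resulting isomorphism of Morava modules is realized by an actual $K(n)$-local equivalence. First I would record that $V(0)$ is a type $n$ complex with $E_\ast V(0) = E_\ast/p$, so that
\[
E_\ast(S^{2\lambda}\wedge V(0)) \cong E_\ast(S^{2\lambda})/p
\qquad\text{and}\qquad
E_\ast(S[\det]\wedge V(0)) \cong E_\ast[\det]/p.
\]
Both sides are invertible graded Morava modules concentrated in even degrees, so by the identification of such modules with $H^1(\GG_n,(E_0/p)^\times)$ (the analog of the map $e$ of Remark~\ref{alg-pic-zero} after reduction mod $p$), it suffices to check they determine the same cohomology class. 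The class of $E_0(S^{2\lambda})$ in $H^1(\GG_n,E_0^\times)$ is $t_0^\lambda$ by Remark~\ref{reduced-by-vn-alg}, and the class of $E_0[\det]$ is $\det$ by Remark~\ref{det-defined}; Proposition~\ref{detred-alg} says $\det\cdot t_0^{-\lambda} = \exp(p\zeta)$, which lies in the image of the map $H^1(\GG_n,E_0)\to H^1(\GG_n,E_0^\times)$ from \eqref{fund-seq-exp}, hence dies in $H^1(\GG_n,(E_0/p)^\times)$. Therefore $E_\ast(S^{2\lambda}\wedge V(0)) \cong E_\ast(S[\det]\wedge V(0))$ as graded Morava modules.

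Next I would promote this algebraic isomorphism to a $K(n)$-local equivalence. Set $X = S^{2\lambda}\wedge D_n(S[\det])\wedge V(0)$ (or equivalently $F(S[\det], S^{2\lambda}\wedge V(0))$ with its $\mu$-module structure coming from $V(0)$); this is a dualizable $K(n)$-local spectrum with $E_\ast X \cong E_\ast V(0)$ as Morava modules by the previous paragraph. By the same Toda/obstruction argument used in Proposition~\ref{why-the-hypothesis}, the generator
\[
\iota \in H^0(\GG_n, E_0 X)
\]
classifying this isomorphism is a permanent cycle in the Adams--Novikov spectral sequence: the hypothesis $2p > \max\{n^2+1,2n+2\}$ forces all potential differentials on $\iota$, which land in subquotients of $H^{s+1}(\GG_n,E_s)$ for $s\geq 1$, into zero groups by sparseness (part (1) of Remark~\ref{facts-on-ANSS}) together with the horizontal vanishing line of Proposition~\ref{e2-van-line}. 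A permanent cycle lifting $\iota$ gives a map $S^0 \to X$ which is an $E_\ast$-isomorphism, hence a $K(n)$-local equivalence since $E_\ast(-)$ detects equivalences between dualizable spectra. Smashing through with $S[\det]$ and rearranging gives the desired equivalence $S^{2\lambda}\wedge V(0) \simeq S[\det]\wedge V(0)$.

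The main obstacle, and the reason the hypothesis on $p$ enters, is exactly the permanent-cycle step: a priori one only has an abstract isomorphism of Morava modules, and the argument of Proposition~\ref{why-the-hypothesis} (equivalently, the injectivity half of $\kappa_n = 0$) is what guarantees it comes from a genuine map of spectra. Everything else is bookkeeping: identifying $E_\ast V(0) = E_\ast/p$, tracking which crossed homomorphisms represent $E_0(S^{2\lambda})$ and $E_0[\det]$, and invoking \eqref{fund-seq-exp} to see that $\exp(p\zeta)$ becomes trivial mod $p$. One small point to be careful about is the prime $2$: both $\zeta$ and $S[\det]$ require extra care at $p=2$ (and Proposition~\ref{detred-alg} is stated for $p>2$), but the hypothesis $2p > \max\{n^2+1,2n+2\}$ already excludes $p=2$ for all $n\geq 1$, so there is nothing extra to do here.
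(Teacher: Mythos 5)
Your argument follows the paper's own route. The first paragraph --- reduce to comparing the classes of $E_0(S^{2\lambda})/p$ and $E_0[\det]/p$ in $H^1(\GG_n,(E_0/p)^\times)$, identify them with $t_0^\lambda$ and $\det$ via Remarks \ref{reduced-by-vn-alg} and \ref{det-defined}, and kill the discrepancy $t_0^{-\lambda}\det=\exp(p\zeta)$ using Proposition \ref{detred-alg} and the exact sequence (\ref{fund-seq-exp}) --- is precisely how the paper obtains the isomorphism $E_\ast(S^{2\lambda})/p\cong E_\ast[\det]/p$, and the realization step via the permanent-cycle argument of Proposition \ref{why-the-hypothesis} is exactly what the paper means by ``arguments similar to those in'' that discussion. (A terminological quibble: $E_\ast(S^{2\lambda})/p$ is not an invertible graded Morava module over $E_\ast$; what you are really using is the classification of rank-one free $E_0/p$-modules with twisted $\GG_n$-action by $H^1(\GG_n,(E_0/p)^\times)$, which is what you clearly intend. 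Likewise the differentials on $\iota$ land in subquotients of $H^{s+1}(\GG_n,E_s/p)$ rather than $H^{s+1}(\GG_n,E_s)$, but sparseness and the vanishing line of Proposition \ref{e2-van-line} apply verbatim to these coefficients.)

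The one step that does not parse as written is the last one: a permanent cycle lifting $\iota$ produces a map $S^0\to X$ with $X=S^{2\lambda}\wedge D_n(S[\det])\wedge V(0)$, and since $E_\ast S^0=E_\ast$ while $E_\ast X\cong E_\ast/p$, this map is never an $E_\ast$-isomorphism, so you cannot conclude it is a $K(n)$-local equivalence. You must first extend it over $V(0)$: because $p>2$ and $X$ is a module over the $\mu$-spectrum $V(0)$ (so $p\cdot 1_X=0$), the composite $S^0\xrightarrow{\ p\ }S^0\to X$ is null and the map factors through the cofiber $V(0)$ of $p$. Any such extension $V(0)\to X$ carries the $E_\ast$-module generator of $E_\ast V(0)=E_\ast/p$ to the generator $\iota$ of $E_0X$, hence is an $E_\ast$-isomorphism and therefore a $K(n)$-local equivalence between dualizable spectra; smashing with $S[\det]$ then gives the statement. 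With that one-line repair your proof is complete and coincides with the paper's.
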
 

\begin{rem}[{\bf The case $n=1$}]\label{nisone} The calculations at $n=1$ are somewhat 
anomalous because they are so simple, partly because $\GG_1 = \ZZ_p^\times$ and partly because
$\mm = (p) \subseteq E_0 = \ZZ_p$, so setting $p=0$ is a strong requirement.
But more than all that at $n=1$ we have $\det = t_0$ or, on invertible modules,
we have $E[\det] = E_{-2} = E_0S^2$. If $p > 2$, (\ref{fund-seq-exp}) becomes a short exact sequence in
cohomology
\[
\xymatrix@C=40pt{
0 \longr H^1(\GG_1,\ZZ_p) \rto^-{\exp(p-)} & H^1(\GG_1,\ZZ_p^\times)
\rto & H^1(\GG_1,\FF_p^\times) \longr 0
}
\]
is isomorphic to 
$$
0 \to \ZZ_p \to \ZZ_p^\times \to \FF_p^\times \to 0.
$$
The cohomology group $H^1(\GG_1,\ZZ_p^\times)$ is free of rank one over $\mathfrak{Z}_1 \cong \lim \ZZ/p^k(p-1)$
with generator $t_0$. The function $t_0(g)^\alpha=g'(0)$
is an element of order $p-1$, where we have confused $g'(0)$ with it Teichm\"uller lift to $\ZZ_p$. Finally, by
Proposition \ref{detred-alg}, the function 
$g \mapsto g'(0)^{-1}t_0(g)$ topologically generates the copy of the $p$-adic integers.
\end{rem}

For $n=2$ and primes $p > 2$, both the algebraic and the 
topological  Picard group have been calculated. The algebraic result is as follows. 
If $p >  3$, this result is proved explicitly
in \cite{Lader} \S 5, but it was known much earlier to the second author. At the prime 3,
the algebraic Picard group is the subject of \cite{KaraPic}.

\begin{prop}[{\bf The case $n=2$}]\label{pic-mod-p-n=2} Let $n=2$ and $p> 2$. The long exact sequence
$$
\xymatrix@C=40pt{
\cdots \to H^1(\GG_2, E_0) \rto^-{\mathrm{exp}(p-)} & H^1(\GG_2,E_0^\times) \rto &
H^1(\GG_2, (E_0/p)^\times)  \to \cdots
}
$$
becomes a short exact sequence
$$
\xymatrix{
0 \rto & \ZZ_p \rto & H^1(\GG_2,E_0^\times) \rto & 
\mathfrak{Z}_2\rto &0\ .
}
$$
The group $H^1(\GG_2,E_0)$ is topologically generated by $\zeta$ and the group
$H^1(\GG_2,E_0^\times)$ is topologically generated by cohomology classes defined by
$t_0$ and $\det$. Furthermore, the image of $\zeta$ in $H^1(\GG_2,E_0^\times)$ is
$$
t_0^{-\lambda}\det
$$
where $\lambda = \lim_k\ p^{2k}(p+1) \in  \mathfrak{Z}_2$.
\end{prop}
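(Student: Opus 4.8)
The plan is to bootstrap from Proposition~\ref{detred-alg}, which already gives the universal algebraic identity $\exp(p\zeta) = t_0^{-\lambda}\det$ in $H^1(\GG_n,E_0^\times)$ for every $n$ and every $p>2$, together with the long exact sequence~(\ref{fund-seq-exp}). The content of Proposition~\ref{pic-mod-p-n=2} that is \emph{special} to $n=2$ is the identification of the three cohomology groups involved and the assertion that the long exact sequence degenerates into a short exact sequence of the stated form. So the first thing I would do is pin down $H^1(\GG_2,E_0)$. Here I would invoke the exact sequence of derived completion / local cohomology from \S1 together with known computations of the cohomology of $\SS_2$: the point is that $H^1(\GG_2,E_0)$ is the $p$-complete module of continuous crossed homomorphisms modulo principal ones, and for $n=2$, $p>2$ (so $p-1>n$ and Proposition~\ref{e2-van-line} applies, giving finite cohomological dimension $n^2=4$) this group is known to be a free $\ZZ_p$-module of rank one, topologically generated by $\zeta$. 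The cleanest route is to cite the $n=2$ cohomology computations already in the literature (\cite{Lader}~\S5 for $p>3$, \cite{KaraPic} for $p=3$, and for $p=2$-adjacent structure the methods of \cite{BGH}), since the proposition explicitly says this is where the result is proved; I would not re-derive it.

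Next I would identify $H^1(\GG_2,(E_0/p)^\times)$. The key input is Lemma~\ref{whyRC}: $(E_0/p)^\times$ is a continuous $\RC$-module, and in fact $H^1(\GG_2,(E_0/p)^\times)$ is the ``algebraic degree-zero Picard group mod $p$'' which for $n=2$, $p>2$ is free of rank one over $\mathfrak{Z}_2$. The generator is the image of $t_0$ (equivalently the reduction of $E_0S^2$), using that $t_0^\alpha = g'(0)$ has order $p^n-1 = p^2-1$ which supplies the $\ZZ/(p^2-1)$ torsion part, while the $\ZZ_p$ part comes from the $1+\mm$-component of the units. Again this is a citation to the $n=2$ literature. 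With the two outer groups identified, I would show the connecting map $H^1(\GG_2,E_0)\to H^1(\GG_2,E_0^\times)$, i.e.\ $\exp(p-)$, is injective: by Lemma~\ref{fund-seq-exp-alg} its kernel comes from $H^0(\GG_2,(E_0/p)^\times)/\mathrm{image}$, and since $H^0(\GG_2,(E_0/p)^\times) = ((E_0/p)^\times)^{\GG_2}=\FF_p^\times$ lifts to $\ZZ_p^\times\subseteq E_0^\times$, the relevant boundary term vanishes. Dually, $H^1(\GG_2,E_0^\times)\to H^1(\GG_2,(E_0/p)^\times)$ is surjective once one checks $H^2(\GG_2,E_0)$ has no $p$-torsion obstructing lifts---this follows from the bounded-torsion statement in part~(2) of Remark~\ref{facts-on-ANSS} applied in degree $0$ (where it gives no torsion) combined with the rank-one freeness. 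Hence the long exact sequence collapses to $0\to\ZZ_p\to H^1(\GG_2,E_0^\times)\to\mathfrak{Z}_2\to 0$.

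It remains to record that $H^1(\GG_2,E_0^\times)$ is topologically generated by the classes of $t_0$ and $\det$, and that the image of $\zeta$ is $t_0^{-\lambda}\det$. The latter is \emph{immediate} from Proposition~\ref{detred-alg} with $n=2$: there $\lambda = \lim_k p^{2k}r(2) = \lim_k p^{2k}(p+1)\in\mathfrak{Z}_2$ since $r(2)=(p^2-1)/(p-1)=p+1$, so $\exp(p\zeta)=t_0^{-\lambda}\det$ is exactly the asserted formula. For the generation statement: $t_0$ maps to a topological generator of $\mathfrak{Z}_2$ under the quotient map (by the $n=2$ identification of $H^1(\GG_2,(E_0/p)^\times)$), and $\det$---or rather the class $t_0^{-\lambda}\det=\exp(p\zeta)$, which lies in the sub-$\ZZ_p$ since it is in the image of $\exp(p-)$---topologically generates the kernel $\ZZ_p$ because $\zeta$ topologically generates $H^1(\GG_2,E_0)\cong\ZZ_p$ and $\exp(p-)$ is injective. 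Together $t_0$ and $\exp(p\zeta)$, hence $t_0$ and $\det$, topologically generate the extension.

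The main obstacle is not the homological-algebra bookkeeping (which is routine given the machinery of \S1) but rather the honest identification of $H^1(\GG_2,E_0)$ as $\ZZ_p$ on the generator $\zeta$, and of $H^1(\GG_2,(E_0/p)^\times)$ as $\mathfrak{Z}_2$ on $t_0$---these rest on genuine computations of the continuous cohomology of the Morava stabilizer group at height $2$. My plan is to treat those as imported results, citing \cite{Lader}, \cite{KaraPic}, and \cite{BGH} as the proposition statement itself already does, and to spend the actual proof text on verifying that the long exact sequence~(\ref{fund-seq-exp}) truncates as claimed and that the boundary term lands where Proposition~\ref{detred-alg} says it does.
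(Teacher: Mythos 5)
Your proposal matches the paper's treatment: the paper offers no proof of Proposition~\ref{pic-mod-p-n=2} at all, stating it as a known computation and citing \cite{Lader} \S 5 (for $p>3$) and \cite{KaraPic} (for $p=3$), exactly as you propose to do for the identifications of $H^1(\GG_2,E_0)$ and $H^1(\GG_2,(E_0/p)^\times)$; the formula for the image of $\zeta$ is, as you say, just Proposition~\ref{detred-alg} with $r(2)=p+1$. One small repair: your justification of surjectivity onto $H^1(\GG_2,(E_0/p)^\times)$ appeals to the bounded-torsion statement of Remark~\ref{facts-on-ANSS}(2) ``in degree $0$,'' but that statement explicitly excludes $2t=0$ and says nothing about $H^2(\GG_2,E_0)$. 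This is easily patched: once the imported computation tells you $H^1(\GG_2,(E_0/p)^\times)\cong\mathfrak{Z}_2$ is topologically generated by the reduction of $t_0$, surjectivity is automatic, since $t_0$ visibly lifts to $H^1(\GG_2,E_0^\times)$ and the image of a continuous homomorphism of profinite groups is closed (Remark~\ref{basics-prof}).
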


\begin{rem}\label{pic-mod-p-n=2-top} If $p > 2$, the map 
\[
e:\Pic^0_{K(2)} \longr H^1(\GG_2,E_0^\times)
\]
is a surjection as the target is topologically generated by $E_0S^2$ and $E_0S[\det]$. If $p >3$, it then
follows from Proposition \ref{why-the-hypothesis} that $e$ is an isomorphism. 
At the prime 3, the map $e$ is only onto and the kernel was computed in \cite{GHMRPic}. 
\end{rem}

\section{Two dualities}

In the next section -- see Theorem \ref{detred} -- we give a result relating
Brown-Comenetz and Spanier-Whitehead duality. We go through the two dualities in this section
and how they work with the Adams-Novikov Spectral Sequence (\ref{eq:ANSS}). The ring $R$ in this
section will be shorthand for $E_0$. 

\subsection{Spanier-Whitehead duality}

We define the $K(n)$-local Spanier-Whitehead dual of a spectrum $X$ by
$$
D_nX = F(X,L_{K(n)}S^0).
$$
A spectrum $X$ in the $K(n)$-local category is dualizable if for all $K(n)$-local spectra $Z$ the natural map
\[
D_nX \wedge Z \to F(X,Z)
\]
is a $K(n)$-local equivalence; that is, if $L_{K(n)}(D_nX \wedge Z) \simeq F(X,Z)$. By Theorem 8.6 of
\cite{HvStr} $X$ is dualizable if and only if $E_\ast X$ is finitely generated as an $E_\ast$-module. 

\begin{rem}\label{UCSS-rem} If $X$ is dualizable,  there is an isomorphism
\[
E_t D_nX = E^{-t}X.
\]
To compute $E^\ast X$ we use a Universal Coefficient Spectral Sequence.
The constructions is \S III.13 of \cite{AdamsBlue}, although an easy adaptation to the $K(n)$-local category is needed.
Combining this with the algebra of Appendix A of \cite{HvStr} we get a natural spectral sequence
\[
\Ext^p_{E_\ast}(E_\ast X,\Sigma^q E_\ast) \Longrightarrow E^{p+q}X
\]
where $\Ext_{E_\ast}$ denotes the derived functors of the functor $\Hom_{E_\ast}$ of continuous homomorphisms
in the category of graded $L_0^\mm$-complete $E_\ast$-modules. See Remark \ref{Morava-mods}. 
By definition $\Sigma^q E_\ast = \pi_\ast \Sigma^qE$. Since 
\[
\Hom_{E_\ast}(E_\ast X,\Sigma^q E_\ast) \cong \Hom_{E_0}(E_qX,E_0)
\]
we can rewrite the spectral sequence as
\begin{equation}\label{E-UCSS-note}
\Ext^p_{E_0}(E_qX,E_0)  \Longrightarrow E^{p+q}X. 
\end{equation}
The abutment has a continuous action of $\GG_n$ through the action on $E$; this spectral sequence becomes 
a $\GG_n$-equivariant spectral sequence if we give the $E_2$-term the conjugation action. Concretely,
if $\varphi \in \Hom_{E_0}(M,N)$ and $g \in \GG_n$ we have $(g\varphi)(x) = g\varphi(g^{-1}x)$. 
\end{rem} 

If $E_\ast X$ is $\mm$-torsion -- for
example if $X$ is a type-$n$ complex -- then (\ref{UCSS-simp}) and (\ref{E-UCSS-note}) imply that
$$
E_tD_nX\cong \Hom_R(E_{-t-n}X,R/\mm^\infty).
$$
This is an isomorphism of Morava modules if we use the conjugation action on the module of
homomorphisms. We then define
a functor $D_S(-)$ on  $\mm$-torsion Morava modules to Morava modules by
\[
D_S(M) = \Hom_{E_0}(M,R/\mm^\infty)
\]
with the conjugation action, so that we have
\begin{equation}\label{keyDS}
E_tD_nX\cong D_S(E_{-t-n}X).
\end{equation}

\begin{rem}[\textbf{SW duality and the ANSS spectral sequence}]\label{SWANSS} Suppose $E_\ast X$
is finite and, hence, $\mm$-torsion. The isomorphism of  Morava modules (\ref{keyDS})
allows us to rewrite the Adams-Novikov Spectral Sequence for $D_nX$ as
\begin{equation}\label{ANSSD}
H^s(\GG_n,D_S(E_{-t-n}X)) \Longrightarrow \pi_{t-s}D_nX.
\end{equation}
\end{rem}

\begin{exam}\label{nistwotypetwo} For self-dual $X$, the self-duality extends to the Adams-Novikov Spectral
Sequence. To be specific, we work at chromatic height $2$ and at $p>2$. Let $V(0)$ be the mod $p$ Moore
spectrum and let $v_1:\Sigma^{2(p-1)}V(0) \to V(0)$ be a  $v_1$-self map. We define
$M(1,n)$ by the  cofiber sequence 
\[
\xymatrix{
\Sigma^{2n(p-1)}V(0) \rto^-{v_1^n} & V(0) \rto & M(1,n).
}
\]
Since $v_1 = u_1u^{1-p} \in E_\ast V(0) = E_\ast/p$, we have that $E_\ast M(1,n)$ is concentrated in even degrees
and $E_0M(1,n) = \FF_{p^2}[u_1]/(u_1^{n})$

Because $p>2$, the $v_1$-self map of $V(0)$ is unique up to a unit in $\ZZ/p$, so we have a $K(n)$-equivalence
$D_2M(1,n) \simeq \Sigma^{-2n(p-1)-2}M(1,n)$. This fact and (\ref{keyDS}) give isomorphisms of Morava modules
\[
E_{2t+2n(p-1)+2}M(1,n) \cong E_{2t}D_2M(1,n) \cong \Hom_{E_0}(E_{2t-2}M(1,n),E_0/\mm^\infty)
\]
which sends the generator $u^{-t-n(p-1)}$ of $E_{2t+2n(p-1)+2}M(1,n)$ 
to the function
\[
u^{-t+1} \mapsto 1/pu_1^n = u^{-n(p-1)}/pv_1^n.
\]
Making these substitutions for $M(1,n)$, the Adams-Novikov Spectral Sequence (\ref{ANSSD}) becomes
$$
H^s(\GG_n,E_{2t+2n(p-1)+2}M(1,n)) \Longrightarrow \pi_{2t-s}\Sigma^{-2n(p-1)-2}
L_{K(2)}M(1,n).
$$
This is, up to reindexing, isomorphic to the original Adams-Novikov Spectral Sequence for $M(1,n)$.
\end{exam}

\subsection{Brown-Comentez duality}

Let $\ZZ/p^\infty = \QQ/\ZZ_{(p)}$ and let $I$ be the Brown-Comenetz
dual of the sphere; this spectrum is defined by the natural isomorphism
$$
\Hom(\pi_0 X, \ZZ/p^\infty) \cong [X,I].
$$
If $X$ is $K(n)$-local and 
if $M_nX$ is the fiber of $X \to L_{n-1}X=L_{K(0)\vee\ldots\vee K(n-1)}X$, then we define the
$K(n)$-local Brown-Comenetz dual\footnote{The nomenclature shifted some time after the appearance
of \cite{HvStr} and $K(n)$-local Brown-Comenetz duality became to be called Gross-Hopkins duality. We adhere to the older 
version.} of $X$ to be 
$$
I_nX = F(M_nX,I) \simeq F(X,I_n).
$$
Here $I_n = I_nL_{K(n)}S^0$.

By the work of Gross and the second author, we know the Morava module $E_0I_n$. Let's recall the outline of the argument
from \cite{StrickGrossHop}. The fact that $\GG_n$ is a virtual Poincar\'e duality 
group of dimension $n^2$ and the fact that the top local cohomology group
of $R = E_0$ is in degree $n$ (see (\ref{Rminfty})) yield an isomorphism of Morava
modules
$$
E_0 I_n= E_{-n^2-n} \otimes_R \Omega^{n-1}
$$
where
$$
\Omega^{n-1} = \bigwedge^{n-1} \Omega_{R/\WW} = R\{ du_1 \wedge \ldots \wedge du_{n-1}\}
$$
is the top exterior power of the differentials on $R=E_0$ over $\WW = W(\FF_{p^n})$.
It follows that $I_n$ is dualizable in the $K(n)$-local category, and hence
\begin{equation}\label{duals-togegther}
I_nX \simeq D_nX \wedge I_n.
\end{equation}
One of the main results of \cite{GHAlt} is that
there is an isomorphism of Morava modules
\begin{equation}\label{EINbis}
\Omega^{n-1} \cong E_{2n}[det]
\end{equation}
and hence
\begin{equation}\label{EIN}
E_0 I_n \cong E_{-n^2+n}[\det] \cong E_0S^{n^2-n}[\det].
\end{equation}
From this it follows that for $X$ finite and $\mm$-torsion
\begin{equation}\label{ebc} 
E_t I_nX \cong \Hom_R(E_{-t-2n+n^2}X,E_0[\det]/\mm^\infty)
\end{equation}

\begin{rem}[\textbf{BC duality and the ANSS}]\label{BCANSS} We continue to write $R=E_0$.
If $M$ is finite and $\mm$-torsion, define
$D_IM = \Hom_R(M,\Omega^{n-1}/\mm^\infty)$
with the conjugation action. Then we have from \ref{EINbis}) and (\ref{ebc}) that
\begin{equation}\label{keyDI}
E_tI_nX = D_I(E_{-t+n^2}X)
\end{equation} 
if $X$ is finite and $E_\ast X$ is $\mm$-torsion.
In \cite{StrickGrossHop}, Strickland defined an equivariant residue map
$$
H^{n^2}(\GG_n,\Omega^{n-1}/\mm^\infty) \to \ZZ/p^\infty.
$$
The residue map then defines a pairing
$$
H^s(\GG_n,M) \otimes H^{n^2-s}(\GG_n,D_IM) \to \ZZ/p^\infty\ .
$$
Under the hypothesis that $p-1 > n$ (compare Proposition \ref{e2-van-line}), this is a perfect pairing.
The Adams-Novikov Spectral Sequence for $X$ 
\begin{equation}\label{ANSSI}
H^s(\GG_n,D_I(E_{-t+n^2}X)) \Longrightarrow \pi_{t-s}I_nX.
\end{equation}
can then be rewritten as
\[
H^{n^2-s}(\GG_n,E_{-t+n^2}X)^\vee \Longrightarrow (\pi_{s-t}X)^\vee
\]
where $A^\vee = \Hom(A,\ZZ/p^\infty)$. This dualized spectral sequence is presumably dual to the original spectral sequence. 
\end{rem}

\section{The duality theorem}

Here we come to the algebraic and topological consequence of Proposition \ref{detred-alg}.
We have defined an element
$$
\lambda = \lim_k p^{nk}r(n) \in \lim_k \ZZ/p^{nk}(p^n-1) = \RC.
$$
Recall from Remark \ref{Morava-mods} that a graded Morava module $M_\ast$ is finite if $M_0 $ and $M_{1}$ are finite. 
Such a Morava module is automatically $\mm$-torsion, where $\mm \subseteq E_0$ is the maximal ideal.
The results of the previous sections now can be combined to give the following.

\begin{prop}\label{alg-DH} 1.) Let $M$ be a finite Morava module and
suppose $pM=0$. Then there is a natural isomorphism of Morava modules
$$
D_IM \cong E_0(S^{2\lambda - 2n}) \otimes_{E_0} D_SM.
$$

2.) Suppose $E_\ast X$ is a finite Morava module and
suppose $pE_\ast X=0$. Then there is a natural isomorphism
$$
E_\ast I_n X \cong E_{0}(S^{2\lambda + n^2-n}) \otimes_{E_0} E_\ast D_nX.
$$
\end{prop}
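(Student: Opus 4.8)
The plan is to deduce both statements from Proposition \ref{detred-alg}, working entirely with $\mm$-torsion Morava modules. For part (1), I would start from the defining formulas $D_SM = \Hom_{E_0}(M,E_0/\mm^\infty)$ and $D_IM = \Hom_{E_0}(M,\Omega^{n-1}/\mm^\infty)$, both with the conjugation action (see Remark \ref{UCSS-rem} and Remark \ref{BCANSS}). Since $\Omega^{n-1}/\mm^\infty = (\Omega^{n-1}\otimes_{E_0} E_0/\mm^\infty)$ and $\Omega^{n-1}$ is free of rank one over $E_0$, there is a natural isomorphism $D_IM \cong \Omega^{n-1}\otimes_{E_0} D_SM$ as Morava modules, for any $\mm$-torsion $M$. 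So the content of part (1) reduces to an isomorphism of invertible Morava modules
\[
\Omega^{n-1} \cong E_0(S^{2\lambda-2n}) \otimes_{E_0} (\text{something trivializable when }pM=0).
\]
Here is where the hypothesis $pM = 0$ enters: tensoring a Morava module isomorphism with $M$ only sees the reduction mod $p$. By (\ref{EINbis}) we have $\Omega^{n-1} \cong E_{2n}[\det] \cong E_0S^{-2n}\otimes_{E_0} E_0[\det]$. On the other hand, Theorem \ref{large-p-det} — or rather its algebraic underpinning, the isomorphism $E_\ast(S^{2\lambda})/p \cong E_\ast[\det]/p$ coming from Proposition \ref{detred-alg} and the sequence (\ref{fund-seq-exp}) — gives $E_0[\det]/p \cong E_0(S^{2\lambda})/p$. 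Tensoring the chain $\Omega^{n-1}\otimes_{E_0}\!\!-$ against $M$ and using $pM=0$ throughout, I can replace $E_0[\det]$ by $E_0(S^{2\lambda})$, yielding
\[
D_IM \cong \Omega^{n-1}\otimes_{E_0} D_SM \cong E_0S^{-2n}\otimes_{E_0} E_0(S^{2\lambda})\otimes_{E_0} D_SM \cong E_0(S^{2\lambda-2n})\otimes_{E_0} D_SM,
\]
which is part (1). Naturality is clear since every isomorphism used is natural in $M$ over the category of mod-$p$ $\mm$-torsion Morava modules.

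For part (2), I would feed $M = E_\ast X$ (which is finite, hence $\mm$-torsion, by Remark \ref{Morava-mods}) into part (1), but using the graded refinement of the duality formulas rather than the ungraded ones. The identities (\ref{keyDS}) and (\ref{keyDI}) read $E_tD_nX \cong D_S(E_{-t-n}X)$ and $E_tI_nX \cong D_I(E_{-t+n^2}X)$. Applying part (1) with $M = E_{-t+n^2}X$ and then re-indexing to convert the $D_S$ term back into $E_\ast D_nX$:
\[
E_tI_nX \cong D_I(E_{-t+n^2}X) \cong E_0(S^{2\lambda-2n})\otimes_{E_0} D_S(E_{-t+n^2}X) \cong E_0(S^{2\lambda-2n})\otimes_{E_0} E_{t-n^2+n\,-\,?}D_nX,
\]
and matching the shift so that $D_S(E_{-s-n}X) = E_sD_nX$ with $-s-n = -t+n^2$, i.e. $s = t - n^2 - n$, produces the grading shift $S^{n^2-n}$, giving $E_\ast I_nX \cong E_0(S^{2\lambda+n^2-n})\otimes_{E_0} E_\ast D_nX$ as graded Morava modules. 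The bookkeeping of suspension shifts — tracking the $2n$ versus $-2n$ versus $n^2-n$, and keeping the grading consistent with the conventions of Remark \ref{Morava-mods} that $E_{t+2}\cong E_2\otimes_{E_0} E_t$ — is the only delicate part; I expect to double-check it against the known formula (\ref{EIN}), namely $E_0I_n \cong E_0 S^{n^2-n}[\det]$, which must come out of part (2) applied formally to $X = S^0$ (with the caveat that $E_\ast S^0$ is not $\mm$-torsion, so one uses it only as a consistency check on shifts, not as an instance of the proposition).

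The main obstacle I anticipate is not any deep input — everything substantive is already in Proposition \ref{detred-alg} and (\ref{EINbis}) — but rather making precise the step ``tensor a mod-$p$ isomorphism of invertible modules against an $\mm$-torsion module with $pM=0$ and recover an honest isomorphism.'' The clean way to phrase this: if $P$ and $Q$ are invertible Morava modules with $P/p \cong Q/p$ as Morava modules, and $pM=0$, then $P\otimes_{E_0} M \cong (P/p)\otimes_{E_0/p} M \cong (Q/p)\otimes_{E_0/p} M \cong Q\otimes_{E_0} M$ naturally in $M$. This is elementary but should be stated as a short lemma so that both parts of the proposition follow formally; getting the $\GG_n$-equivariance of these identifications exactly right (the conjugation action interacts with $\otimes$ diagonally) is the one place where care is needed.
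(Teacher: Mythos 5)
Your proposal is correct and follows essentially the same route as the paper: part (1) is the chain $D_IM \cong \Hom_{E_0}(M,\Omega^{n-1}/\mm^\infty) \cong E_{2n}[\det]\otimes_{E_0} D_SM$, with Proposition \ref{detred-alg} used to replace $E_0[\det]$ by $E_0(S^{2\lambda})$ once $pM=0$, and part (2) is the same grading bookkeeping via (\ref{keyDS}) and (\ref{keyDI}). Your explicit lemma on swapping invertible modules that agree mod $p$ is exactly the step the paper leaves implicit in the phrase ``we use Proposition \ref{detred-alg} in the last line.''
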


\begin{proof} This all follows from Proposition \ref{detred-alg}, which gives an isomorphism
of Morava modules
$$
E_0[\det]/p \cong E_0(S^{2\lambda})/p.
$$
For part (1), we combine the following isomorphisms
\begin{align*}
D_IM &\cong \Hom_R(M,\Omega^{n-1}/\mm^\infty)\\
&\cong \Hom_R(M,R/m^\infty) \otimes_R E_{2n}[det]\\
&\cong D_SM \otimes_R E_0(S^{2\lambda-2n}).
\end{align*}
We use Proposition \ref{detred-alg} in the last line. Part (2) follows from Part (1)
and the isomorphisms
\begin{align*}
E_t I_nX &\cong D_I(E_{{-t+n^2}}X)\\
E_t D_nX & \cong D_S(E_{-t-n}X)
\end{align*}
of (\ref{keyDS}) and (\ref{keyDI}). 
\end{proof}

\begin{prop}\label{top-DH} Let $2p > \mathrm{max}\{n^2+1,2n+2\}$ and let $X$ be a finite type $n$ spectrum 
with $p1_X=0: X\to X$. Then there is a weak equivalence in the $K(n)$-local category 
$$
I_n X \cong S^{2\lambda + n^2-n} \wedge D_nX.
$$
\end{prop}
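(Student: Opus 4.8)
The plan is to lift the algebraic isomorphism of Proposition~\ref{alg-DH}(2) to a topological equivalence, by the same Adams--Novikov obstruction argument used to prove Proposition~\ref{why-the-hypothesis}. Write $W = S^{2\lambda+n^2-n}\wedge D_nX$. By (\ref{duals-togegther}) we have $I_nX\simeq I_n\wedge D_nX$, and since $X$ is a finite type $n$ spectrum with $p1_X=0$ both $W$ and $I_nX$ are dualizable in the $K(n)$-local category, their $E_\ast$ are finite Morava modules killed by $p$, and Proposition~\ref{alg-DH}(2) supplies an isomorphism $E_\ast I_nX\cong E_\ast W$. A choice of such an isomorphism determines a class $\iota\in H^0(\GG_n,E_0F(W,I_nX))$ in the $E_2$-page of the spectral sequence (\ref{eq:ANSS}) for $F(W,I_nX)$, which in total degree $0$ converges to $\pi_0F(W,I_nX)=[W,I_nX]_{K(n)}$. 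If $\iota$ is a permanent cycle it is represented by a map $f\colon W\to I_nX$ inducing the chosen isomorphism on $E_0$, hence on $E_\ast$; the cofibre of $f$ is then $E_\ast$-acyclic, hence $K(n)_\ast$-acyclic, hence---being $K(n)$-local---contractible, so $f$ is the required equivalence. Thus everything reduces to showing $\iota$ survives.

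To locate the obstructions I first rewrite the coefficient module. By (\ref{EIN}) together with the injectivity of $e$ (Proposition~\ref{why-the-hypothesis}) we have $I_n\simeq S^{n^2-n}\wedge S[\det]$ in $\Pic_{K(n)}$; hence $F(W,I_nX)\simeq Q\wedge F(D_nX,D_nX)$, where $Q=S[\det]\wedge S^{-2\lambda}\in\Pic^0_{K(n)}$. The crossed homomorphism of $Q$ is $\det\cdot t_0^{-\lambda}$, which by Proposition~\ref{detred-alg} equals $\exp(p\zeta)$ and is therefore congruent to $1$ modulo $p$; since $D_nX$, and hence $F(D_nX,D_nX)$, is killed by $p$, this yields an isomorphism of Morava modules $E_0F(W,I_nX)\cong\End_{E_0}(E_0D_nX)$ for the conjugation $\GG_n$-action, under which $\iota$ corresponds to the identity endomorphism. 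Consequently the differentials on $\iota$ lie in subquotients of $H^{s+1}(\GG_n,\ E_s\otimes_{E_0}\End_{E_0}(E_0D_nX))$ for $s\ge1$---precisely parallel to Proposition~\ref{why-the-hypothesis}, where the differentials lie in subquotients of $H^{s+1}(\GG_n,E_s)$.

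These groups vanish in the relevant range. The horizontal vanishing line of Proposition~\ref{e2-van-line}, which uses $p-1>n$ (the hypothesis $2p>2n+2$), kills them for $s+1>n^2$. For $1\le s\le n^2-1$ one uses the central cyclic subgroup $C=\FF_p^\times\subseteq Z(\GG_n)$ exactly as in Remark~\ref{facts-on-ANSS}(1): $E_s=0$ when $s$ is odd, and when $s$ is even $H^{s+1}(\GG_n,E_s\otimes\End_{E_0}(E_0D_nX))$ depends only on the part of $\End_{E_0}(E_0D_nX)$ on which $C$ acts through the character $a\mapsto a^{s/2}$. The bound $2p>n^2+1$ forces $0<s/2<p-1$ throughout this range, so when $\End_{E_0}(E_0D_nX)$ is concentrated in the trivial $C$-character---as happens whenever $E_0D_nX$ is, for instance when $X$ is a generalized Moore spectrum $S/J$---this part is zero and $\iota$ is a permanent cycle, just as in Proposition~\ref{why-the-hypothesis}. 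The main obstacle is the general case, where $\End_{E_0}(E_0D_nX)$ need not be centrally isotypic: here one exploits that $\iota$ is the unit of the ring spectrum $F(D_nX,D_nX)$, hence a permanent cycle in the \emph{untwisted} spectral sequence (it is pulled back from $1\in\pi_0S^0$), and that the twist $Q$ becomes trivial after smashing with $V(0)$ by Theorem~\ref{large-p-det} while $D_nX$ is a retract of $V(0)\wedge D_nX$ because $p1_X=0$; combining these, one argues that the twisted differentials on $\iota$ reduce to the untwisted ones and so all vanish. Once this comparison in the middle range $1\le s\le n^2-1$ is in place, the equivalence follows.
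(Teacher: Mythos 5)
Your reduction to showing that $\iota\in H^0(\GG_n,E_0F(W,I_nX))$ is a permanent cycle is sound, and you are right to flag that the argument of Proposition \ref{why-the-hypothesis} does not simply transplant: there the coefficient module is $E_\ast$ itself, so sparseness via the central $\FF_p^\times$ plus the vanishing line of Proposition \ref{e2-van-line} kill everything, whereas here the coefficients are $E_s\otimes_{E_0}\End_{E_0}(E_0D_nX)$ and $E_0D_nX$ need not be centrally isotypic for a general finite type $n$ complex $X$. But the step you offer to close this gap --- ``combining these, one argues that the twisted differentials on $\iota$ reduce to the untwisted ones and so all vanish'' --- is an assertion, not an argument. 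The twist $Q=S[\det]\wedge S^{-2\lambda}$ is not a sphere, so there is no map of spectra comparing the untwisted tower for $F(D_nX,D_nX)$ with the twisted one; the comparison only exists after smashing with $V(0)$, and extracting from that a statement about differentials on $\iota$ \emph{before} smashing with $V(0)$ is exactly the point at issue. As written, this is a genuine gap.

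The paper avoids the spectral-sequence comparison altogether and instead builds the map directly out of the two ingredients you already have in hand. Since $p1_X=0$ and $p>2$, the cofiber sequence $D_nX\xrightarrow{i}D_nX\wedge V(0)\xrightarrow{q}\Sigma D_nX$ splits; choose a section $s$ of $q$. Theorem \ref{large-p-det} gives an equivalence $S^{n^2-n}[\det]\wedge V(0)\simeq S^{2\lambda+n^2-n}\wedge V(0)$, hence a map $g\colon D_nX\wedge S^{n^2-n}[\det]\wedge V(0)\to D_nX\wedge S^{2\lambda+n^2-n}\wedge V(0)$ realizing the middle row of the evident diagram of split short exact sequences of Morava modules. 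One checks $E_\ast(qgi)=0$ from that diagram, replaces $g$ by $g'=g-sqg$ so that $qg'i=0$ while $E_\ast(g'i)=E_\ast(gi)$, and then $g'i$ factors uniquely through $i$ to give $f\colon D_nX\wedge S^{n^2-n}[\det]\to D_nX\wedge S^{2\lambda+n^2-n}$ inducing the desired isomorphism on $E_\ast$, hence a $K(n)$-local equivalence. This is precisely the honest version of your ``retract of $V(0)\wedge D_nX$ plus triviality of the twist mod $p$'' idea: if you replace your last sentence with this construction (producing the map, rather than comparing differentials), your proof goes through and coincides with the paper's.
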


\begin{proof} We have that
$$
I_nX \simeq D_n X\wedge I_n \simeq D_n  X \wedge S^{n^2-n}[\det].
$$
We also have 
$$
S^{n^2-n}[\det] \wedge V(0) \simeq S^{2\lambda + n^2 -n} \wedge V(0).
$$
Given our assumptions on $n$ and $p$, this is equivalent to the statement that
$$
E_0I_n/p = E_{-n+n^2}[\det]/p \cong E_{-n+n^2}(S^{2\lambda})/p.
$$
See Proposition \ref{why-the-hypothesis}. Since $pE_\ast X = 0$ we have natural isomorphisms of Morava modules
\begin{align}\label{lots-a-isos}
E_\ast I_nX &\cong E_\ast D_nX \otimes_{E_0} E_0S^{n^2-n}[\det]\nonumber\\
& \cong E_\ast D_nX \otimes_{E_0} E_0S^{2\lambda + n^2-n}\\
& \cong E_\ast (D_nX \wedge S^{2\lambda +n^2 -n})\nonumber.
\end{align}
which we are trying to extend to an equivalence of spectra. 

Because $p1_X = 0$ and $p>2$, we have a split cofiber sequence
$$
\xymatrix{
D_nX \rto^-i &D_nX \wedge V(0) \rto^-q & \Sigma D_nX\ .
} 
$$
We choose a section $s:\Sigma D_nX \to D_nX \wedge V(0)$ of $q$. Because the isomorphisms of (\ref{lots-a-isos})
are natural, we have a commutative diagram of split short exact sequences of Morava modules
\begin{equation}\label{big-one}
\xymatrix{
0 \dto & 0 \dto\\
E_\ast D_nX \wedge S^{n^2 -n}[\det]  \dto^{i_\ast} \rto^\cong & E_\ast D_nX \wedge S^{2\lambda +n^2 -n} \dto^{i_\ast}\\
E_\ast (D_nX \wedge S^{n^2 -n}[\det]  \wedge V(0)) \dto^{q_\ast} \rto^\cong &
E_\ast (D_nX \wedge S^{2\lambda +n^2 -n} \wedge V(0)) \dto^{q_\ast}\\
E_\ast \Sigma D_nX \wedge S^{n^2 -n}[\det]   \dto \rto^\cong &
E_\ast \Sigma D_nX \wedge S^{2\lambda +n^2 -n} \dto\\
0 & 0.
}
\end{equation}
We will be done if we can find a map $f:D_nX \wedge S^{n^2 -n}[\det] \to D_nX \wedge S^{2\lambda +n^2 -n}$
realizing the topmost horizontal map of Morava modules. By Theorem \ref{large-p-det} we have an equivalence
$S^{n^2 -n}[\det] \wedge V(0) \to S^{2\lambda +n^2 -n} \wedge V(0)$, so we do have a realization of the middle
horizontal map;
that is, we have a diagram in which we'd like to complete the dotted arrow:
$$
\xymatrix{
D_nX \wedge S^{n^2 -n}[\det]  \rto^-{i} \ar@{-->}[d]_f&
D_nX \wedge S^{n^2 -n}[\det]   \wedge V(0)\dto^g \\
D_nX \wedge S^{2\lambda +n^2 -n} \rto^-{i} &
D_nX \wedge S^{2\lambda +n^2 -n} \wedge V(0) \rto^-{q} 
&  D_nX \wedge S^{2\lambda +n^2 -n}.
}
$$
For the composite $qgi:D_nX \wedge S^{n^2 -n}[\det] \to D_nX \wedge S^{2\lambda +n^2 -n}$ we have
$$
E_\ast(qgi)=0
$$
as the algebraic diagram (\ref{big-one}) above commutes and the columns are exact sequences.
Using the section $s:\Sigma D_nX \to D_nX \wedge V(0)$ chosen above we form
$$
g' = g - sqg.
$$
Then $qg'i = qgi-qsqgi=0$ and
$$
E_\ast (g'i) = E_\ast (gi) - E_\ast(s)E_\ast(qgi) = E_\ast(gi).
$$
Thus, we can let $f$ be the unique map so that $if = g'i$. 
\end{proof} 

We finish by interpreting the number $\lambda$ as an integer in some special cases. We will say a
Morava module has a $v_n^{p^k}$-self map if $v_n^{p^k}=u^{p^k(1-p^n)}$ induces an isomorphism of Morava modules. 
$$
M \longr E_{2p^k(p^n-1)} \otimes_{E_0} M
$$

\begin{thm}\label{detred}  Let $M$ be finite  Morava module. If $\mm^kM = 0$, then $M$ has a $v_n^{p^k}$-self map.
If, in addition, $pM=0$ then
$$
D_IM = \Sigma^{2p^{nk}r(n)-2n} D_SM.
$$
\end{thm}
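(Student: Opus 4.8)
The plan is to read both assertions off the algebra already in place; the only new content is a bookkeeping reconciliation of the $\RC$-indexed sphere $S^{2\lambda}$ with an honest suspension, and the hypothesis $\mm^kM=0$ is exactly what makes that reconciliation legitimate.

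First I would verify the self-map claim. Multiplication by $v_n^{p^k}=u^{-p^k(p^n-1)}$ is the $E_0$-linear isomorphism $M\to E_{2p^k(p^n-1)}\otimes_{E_0}M$ sending $m$ to $u^{-p^k(p^n-1)}\otimes m$; the point is that it is $\GG_n$-equivariant once $\mm^kM=0$. Since the action on the target is diagonal and $g_\ast u=t_0(g)u$, comparing $g_\ast(u^{-p^k(p^n-1)}\otimes m)$ with $u^{-p^k(p^n-1)}\otimes g_\ast m$ introduces the scalar $t_0(g)^{-p^k(p^n-1)}$. By the computation in the proof of Lemma \ref{whyRC} — $t_0(g)\equiv g'(0)\in\FF_{p^n}^\times$ mod $\mm$, hence $t_0(g)^{p^k(p^n-1)}\equiv 1$ mod $\mm^{k+1}$ — that scalar lies in $1+\mm^{k+1}\subseteq 1+\mm^k$, so it acts as the identity on the $\mm^k$-torsion module $M$. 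This gives the $v_n^{p^k}$-self map.

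For the duality formula I would begin from Proposition \ref{alg-DH}(1), which already supplies a natural isomorphism $D_IM\cong E_0(S^{2\lambda-2n})\otimes_{E_0}D_SM$, so it remains to replace $S^{2\lambda}$ by $S^{2p^{nk}r(n)}$. One first observes $\mm^kD_SM=0$: for $a\in\mm^k$ and $\varphi\in\Hom_{E_0}(M,R/\mm^\infty)$ we have $(a\varphi)(m)=\varphi(am)=0$. Next, $E_0(S^{2\lambda})$ and $E_0(S^{2p^{nk}r(n)})$ are the rank-one Morava modules with twisting crossed homomorphisms $t_0^{\lambda}$ and $t_0^{p^{nk}r(n)}$ respectively (the algebraic analog recorded in Remark \ref{reduced-by-vn-alg}); since $\lambda-p^{nk}r(n)\in p^{nk}(p^n-1)\RC$, Lemma \ref{whyRC} gives $t_0(g)^{\lambda-p^{nk}r(n)}\in 1+\mm^{nk+1}\subseteq 1+\mm^k$ for every $g$, so these two crossed homomorphisms induce the same twist on any Morava module killed by $\mm^k$. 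Tensoring the resulting isomorphism $E_0(S^{2\lambda})\otimes_{E_0}D_SM\cong E_0(S^{2p^{nk}r(n)})\otimes_{E_0}D_SM$ with $E_0(S^{-2n})$ and recollecting suspensions yields $D_IM\cong E_0(S^{2p^{nk}r(n)-2n})\otimes_{E_0}D_SM=\Sigma^{2p^{nk}r(n)-2n}D_SM$, as claimed.

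I do not expect a serious obstacle: once Proposition \ref{alg-DH} (hence ultimately Proposition \ref{detred-alg}) is granted, the argument is essentially formal. The one point requiring care — and where $\mm^kM=0$ is consumed, once for each assertion — is the passage from the profinite exponent $\lambda\in\RC$ to the integer $p^{nk}r(n)$: one must check that the discrepancy, after exponentiating $t_0$, lands far enough down the $\mm$-adic filtration to act trivially. That is the short computation via Lemma \ref{whyRC} indicated above; the remainder is just keeping the shifts $2p^{nk}r(n)$ and $-2n$ straight.
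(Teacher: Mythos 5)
Your proposal is correct and follows essentially the same route as the paper: the self-map comes from the invariance of $v_n^{p^k}$ modulo $\mm^{k+1}$ (your $t_0$-computation is just the explicit form of that), and the duality formula comes from Proposition \ref{alg-DH} together with the congruence $\lambda \equiv p^{nk}r(n)$ modulo $p^{nk}(p^n-1)$, which is exactly the content of Remark \ref{reduced-by-vn-alg} that the paper cites. Your explicit check that $\mm^k D_SM = 0$ is a detail the paper leaves implicit, but the argument is the same.
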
 

\begin{proof} To prove the first statement, note that the class $v_n = u^{1-p^n} \in E_{2(p^n-1)}$ is $\GG_n$-invariant
modulo $\mm$. Since $\GG_n$ acts through graded ring maps, $v_n^{p^k}$ is invariant modulo $\mm^{k+1}$.
The second statement follows from Remark \ref{reduced-by-vn-alg} and Proposition \ref{alg-DH}. 
\end{proof}

\begin{thm}\label{detred-top} Let $2p > \mathrm{max}\{n^2+1,2n+2\}$ and let $X$ be a finite type $n$ spectrum with
a $v_n^{p^k}$-self map. Suppose $p1_X=0:X \to X$. Then
$$
I_nX = \Sigma^{2p^{nk}r(n)+n^2-n} D_nX.
$$
\end{thm}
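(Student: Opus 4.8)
The plan is to read off Theorem \ref{detred-top} as a formal consequence of Proposition \ref{top-DH} and Proposition \ref{reduced-by-vn}; the only content beyond those two results is the observation that, once a $v_n^{p^k}$-self map is available, the ``$p$-adic sphere'' $S^{2\lambda}$ can be traded for an honest integral suspension.

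First I would invoke Proposition \ref{top-DH}. Its hypotheses are exactly ours: $2p > \max\{n^2+1,2n+2\}$, $X$ is finite of type $n$, and $p1_X = 0$. It gives a $K(n)$-local equivalence
$$
I_nX \simeq S^{2\lambda+n^2-n}\wedge D_nX \simeq \Sigma^{n^2-n}\bigl(S^{2\lambda}\wedge D_nX\bigr).
$$
So it remains to identify $S^{2\lambda}\wedge D_nX$ with $\Sigma^{2p^{nk}r(n)}D_nX$. Here $D_nX$ is the $K(n)$-localization of the finite type $n$ complex $DX$, and dualizing the $v_n^{p^k}$-self map $\Sigma^{2p^k(p^n-1)}X\to X$ of $X$ produces a $v_n^{p^k}$-self map of $D_nX$, since the $K(n)_\ast$-dual of multiplication by $v_n^{p^k}$ is again multiplication by $v_n^{p^k}$. (One can bypass this remark entirely: as $S^{2\lambda}$ is invertible and $X$ is dualizable, $S^{2\lambda}\wedge D_nX \simeq D_n(S^{-2\lambda}\wedge X)$, and one may instead apply Proposition \ref{reduced-by-vn} to $X$ with $a=-\lambda$.)

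Next I would apply Proposition \ref{reduced-by-vn} to $D_nX$ with $a=\lambda\in\RC$. By construction $\lambda = \lim_k p^{nk}r(n)$ in $\RC = \lim_k\ \ZZ/p^{nk}(p^n-1)$; since $n\geq 1$ the integer $p^k(p^n-1)$ divides $p^{nk}(p^n-1)$, so under the projection $\RC \to \ZZ/p^k(p^n-1)$ the element $\lambda$ maps to the class of $a_k := p^{nk}r(n)$. Proposition \ref{reduced-by-vn} thus yields a $K(n)$-local equivalence
$$
S^{2\lambda}\wedge D_nX \simeq \Sigma^{2p^{nk}r(n)}D_nX.
$$
Combining this with the earlier display gives
$$
I_nX \simeq \Sigma^{n^2-n}\Sigma^{2p^{nk}r(n)}D_nX = \Sigma^{2p^{nk}r(n)+n^2-n}D_nX,
$$
which is the assertion.

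There is no serious obstacle here: all the substance has already been packaged into Propositions \ref{top-DH} and \ref{reduced-by-vn}, and ultimately into the algebraic Proposition \ref{detred-alg}. The only things left to verify are the bookkeeping of the congruence defining $\lambda$ and the elementary fact that $D_nX$ inherits a $v_n^{p^k}$-self map; both are routine, so the proof is short.
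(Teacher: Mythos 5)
Your proposal is correct and follows exactly the paper's route: the paper's entire proof is "This follows from Propositions \ref{reduced-by-vn} and \ref{top-DH}," and you have simply supplied the (accurate) bookkeeping — the congruence $\lambda \equiv p^{nk}r(n) \bmod p^k(p^n-1)$ and the fact that $D_nX$ inherits a $v_n^{p^k}$-self map — that the authors leave implicit.
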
 

\begin{proof} This follows from Propositions \ref{reduced-by-vn} and \ref{top-DH}.
\end{proof}

\begin{rem}\label{self-dual} Theorem \ref{detred-top}  has strong implications for the Adams-Novikov spectral sequence
of a Spanier-Whitehead self-dual complex. For example, let $n=2$ and $p> 3$. Let $M(1,s)$ be the cofiber of
$v_1^s:\Sigma^{2s(p-1)}V(0) \to V(0)$; see Example \ref{nistwotypetwo}. Then, $M(1,s)$ is Spanier-Whitehead
self dual. We check $M(1,s)$ has a $v_2^{p^k}$-self map as long as $s \leq p^k$. Furthermore
$D_2M(1,s) = \Sigma^{-qn-2}M(1,s)$ where $q = 2(p-1)$. Then
$$
I_2M(1,p^k) = \Sigma^{2p^{2k}(p+1) +2} D_2M(1,p^k) = \Sigma^{2p^{2k}(p+1) - p^kq}
L_{K(2)}M(1,p^k).
$$
In combination with Remark \ref{SWANSS} and Remark \ref{BCANSS} this forces a very strong
duality onto the Adams-Novikov Spectral Sequence for $M(1,s)$. This is often what people mean
when they point to charts to discuss Gross-Hopkins duality.

The spectrum $M(1,1)$ is the Smith-Toda complex $V(1)$ and we have
\[
I_2V(1) \simeq \Sigma^{2(p+1) +2} D_2V(1) \simeq \Sigma^4 V(1).
\]
For this example, the duality in the Adams-Novikov Spectral Sequence discussed in Remark \ref{BCANSS} can be checked
by hand. In fact, $E_\ast V(1) = \FF_{p^2}[u^{\pm 1}]$ and there is an isomorphism
\[
\FF_p[v_2^{\pm 1}] \otimes \Lambda(\zeta_2) \otimes A \cong H^\ast(\GG_2,E_\ast V(1))
\]
where $A$ is the $3$-dimensional Poincar\'e duality algebra on classes
\begin{align*}
h_0 \in H^1(\GG_2,E_{2(p-1)}V(1))\qquad & h_1 \in H^1(\GG_2,E_{-2(p-1)}V(1))\\
g_0 \in H^2(\GG_2,E_{2(p-1)}V(1))\qquad& g_1 \in H^2(\GG_2,E_{-2(p-1)}V(1))\\
h_0g_1 = g_0h_1 &\in h_0 \in H^3(\GG_2,E_0V(1)).
\end{align*}
See Theorem 6.3.22 of \cite{RavVeryGreen}. 
\end{rem}

\begin{rem}\label{counter-exam} The hypothesis that $p$ be large with respect to $n$ is needed in Theorem \ref{detred}.
At the prime $2$ there is no non-trivial finite CW spectrum $X$ such that the identity $1_X: X \to X$ has order $2$.
To see this let $n$ be the largest integer so that there is a class $x \in H^\ast X = H^\ast(X,\FF_2)$ with $\Sq^n(x) \ne 0$.
Then if $a \in H^0V(0)$ is the bottom class, we have $\Sq^{n+1}(x \times a) \ne 0$ in $H^\ast(X \wedge V(0))$,
so $X \wedge V(0)$ cannot split as $X \vee \Sigma X$. This simple argument is due to Mark Mahowald.

The prime $2$ may always be an outlier, but at $n=2$ and $p=3$, the Smith-Toda complex $V(1)$ is a counterexample
to extending Theorem \ref{detred}. The identity map $V(1) \to V(1)$ does have order $3$.
By \cite{BehPem}, $V(1)$ has a $v_2^9$-self map, and hence
$L_{K(2)}V(1)$ is $144$-periodic. In \cite{GHMV1} we showed that the homotopy groups of $L_{K(2)}V(1)$ are
exactly 144-periodic; see also the charts at the end of \cite{bcdual}. 
In particular, there is no $v_2^3$-self map. We also noted in  \S 7 of \cite{bcdual}
that $I_2 \wedge V(1) \simeq \Sigma^{-22}V(1)$; see also \cite{BehrensQ}. This implies
$$
I_2V(1) \simeq I_2\wedge D_2V(1) \simeq \Sigma^{-28}L_{K(2)}V(1) \simeq \Sigma^{116}L_{K(2)}V(1)
$$
since $D_2V(1) \simeq \Sigma^{-6}L_{K(2)}V(1)$. 
By contrast
$$
\Sigma^{2p^{nk}r(n)+n^2-n} D_nV(1) \simeq \Sigma^{650}D_2V(1) \simeq \Sigma^{644} L_{K(2)}V(1).
$$
Since $644 \equiv 68$ modulo $144$, the equivalence of Theorem \ref{detred-top} can't hold. The map
$$
e:\Pic_{K(2)}^0 \longr H^1(\GG_2,E_0^\times)
$$
is not injective at $n=2$ and $p=3$. The difference
$116-68=48$ is related to the fact that there is an element $P$ in the kernel of the homomorphism of
$e$ (\ref{exotic-e})  so that $P \wedge V(1) \simeq
\Sigma^{48}L_{K(2)}V(1)$.  All of this is 
covered in excruciating detail in \cite{GHMRPic} and \cite{bcdual}. Scholars in this field will recognize
the number $48$ as having near-cabalistic significance; this is only one of the many places it appears.
\end{rem}

\bibliographystyle{alpha}

\bibliography{bibduality}

\end{document}